\theoremstyle{definition}{
\newtheorem{Def}{{\rm Definition}}
\newtheorem{Ex}{{\rm Example}}
\newtheorem{Rem}{{\rm Remark}}

\theoremstyle{plain}
{
\newtheorem{Cor}{Corollary}
\newtheorem{Prop}{Proposition}
\newtheorem{Thm}{Theorem}

\newtheorem{MThm}{Main Theorem}
}

\begin{document}
\title[Special generic maps into ${\mathbb{R}}^k$ where $k \geq5$]{Notes on explicit special generic maps into Euclidean spaces whose dimensions are greater than 4}
\author{Naoki Kitazawa}
\keywords{Singularities of differentiable maps. Fold (special generic) maps. Differentiable structures. Higher dimensional closed manifolds. Homology groups. Cohomology rings. \\
\indent {\it \textup{2020} Mathematics Subject Classification}: Primary ~57R45. Secondary ~57R19.}
\address{Institute of Mathematics for Industry, Kyushu University, 744 Motooka, Nishi-ku Fukuoka 819-0395, Japan\\
 TEL (Office): +81-92-802-4402 \\
 FAX (Office): +81-92-802-4405 \\
}
\email{n-kitazawa@imi.kyushu-u.ac.jp}
\urladdr{https://naokikitazawa.github.io/NaokiKitazawa.html}
\maketitle
\begin{abstract}
{\it Special generic} maps are higher dimensional versions of Morse functions with exactly two singular points on closed manifolds.
They play important roles in so-called Reeb's theorem, characterizing spheres which are not $4$-dimensional topologically and $4$-dimensional unit spheres as smooth manifolds.
The class of such maps also contains canonical projections of unit spheres.

The present paper mainly studies the case where the dimensions of the manifolds of the targets are at least $5$. Special generic maps have been attractive from the viewpoint of algebraic topology and differential topology of manifolds. These maps have been shown to restrict the topologies and the differentiable structures of the manifolds strongly by Calabi, Saeki and Sakuma before 2010s, and later Nishioka, Wrazidlo and the author. So-called {\it exotic} spheres admit no special generic maps in considerable cases and homology groups and cohomology rings are strongly restricted. Such maps into Euclidean spaces whose dimensions are smaller than $5$ have been studied well. 
\end{abstract}
\section{Introduction.}
A {\it special generic} map is a smooth ($C^{\infty}$) map from an $m$-dimensional manifold with no boundary into an $n$-dimensional manifold with no boundary such that at each {\it singular} point it has the form
$(x_1,\cdots x_m) \mapsto (x_1,\cdots,x_{n-1},{\Sigma}_{j=1}^{m-n+1} {x_{n-1+j}}^2)$ for suitable coordinates with $m \geq n \geq 1$. A {\it singular} point $p \in X_1$ of a smooth map $c:X_1 \rightarrow X_2$ is a point at which the rank of the differential ${dc}_p$ is smaller than $\min\{\dim X_1,\dim X_2\}$ where $\dim X$ denotes the dimension of a manifold $X$ (or a space $X$ of a more general class such as the class of polyhedra). 
Morse functions with exactly two singular points are special generic. Reeb's theorem characterizes spheres topologically except $4$-dimensional cases and $4$-dimensional unit spheres as closed and smooth manifolds by such functions. 
 We introduce fundamental terminologies and notation on smooth manifolds.

${\mathbb{R}}^k$ denotes the $k$-dimensional Euclidean space and it is endowed with the standard Euclidean metric: $||x||$ denotes the distance between $x$ and the origin $0$ there.
$S^k:=\{x \in {\mathbb{R}}^{k+1} \mid ||x||=1\}$ ($D^k:=\{x \in {\mathbb{R}}^{k+1} \mid ||x|| \leq 1\}$) denotes the $k$-dimensional unit sphere (resp. disk) for $k \geq 1$.
$\mathbb{R}$ is for ${\mathbb{R}}^1$. $\mathbb{Q} \subset \mathbb{R}$ denotes the ring of all rational numbers and $\mathbb{Z} \subset \mathbb{Q}$ denotes the ring of all integers.


A {\it homotopy} sphere is a closed and smooth manifold homotopy equivalent to a sphere: as a result it is homeomorphic to the sphere. A {\it standard} sphere is a homotopy sphere diffeomorphic to a unit sphere. 
An {\it exotic} sphere is a homotopy sphere which is not diffeomorphic to any standard sphere. $7$-dimensional ones are familiar as ones discovered and studied first. See \cite{milnor} and see also \cite{eellskuiper} for example.

Special generic maps have been shown to restrict the topologies and the differentiable structures of homotopy spheres and the manifolds. 
For example, canonical projections of unit spheres are special generic whereas exotic spheres admit no such maps in considerable cases.

 Hereafter, the {\it singular set} $S(c)$ of a smooth map $c$ is the set of all singular points of the map. $c(S(c))$ is the {\it singular value set} of $c$ and $Y-c(S(c))$ is the {\it regular value set}. A point in the singular value set is a {\it singular value} and one in the regular value is a {\it regular value}.
\begin{Prop}
\label{prop:1}
For a special generic map $c:X \rightarrow Y$, the singular set is a closed smooth submanifold of dimension $\dim X-1$ of the manifold $c$ and has no boundary. Furthermore, $c {\mid}_{S(c)}$ is a smooth immersion.
\end{Prop}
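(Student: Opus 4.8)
The plan is to reduce everything to a chart-by-chart computation based on the local normal form that is built into the definition of a special generic map, after which the statement becomes elementary linear algebra. Write $m=\dim X$ and $n=\dim Y$, so that $m\ge n$, and fix a singular point $p\in S(c)$. By definition one may choose coordinates $(x_1,\dots,x_m)$ on an open neighbourhood $U$ of $p$ in $X$ and coordinates on a neighbourhood of $c(p)$ in $Y$, centred at $p$ and $c(p)$ respectively, in which $c$ has the stated form
\[
(x_1,\dots,x_m)\ \longmapsto\ \Bigl(x_1,\dots,x_{n-1},\ \textstyle\sum_{j=1}^{m-n+1}{x_{n-1+j}}^{2}\Bigr).
\]
The first step is to write down the Jacobian matrix of this representative at an arbitrary point $q=(x_1,\dots,x_m)\in U$: its first $n-1$ rows are the standard basis vectors $e_1,\dots,e_{n-1}$ of $\mathbb{R}^m$, and its last row is $2(0,\dots,0,x_n,\dots,x_m)$. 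Since $e_1,\dots,e_{n-1}$ are supported on the first $n-1$ coordinates while the last row is supported on the remaining $m-n+1$ ones, $\operatorname{rank}{dc}_q=n$ when $(x_n,\dots,x_m)\neq 0$ and $\operatorname{rank}{dc}_q=n-1$ when $(x_n,\dots,x_m)=0$.

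Because $\min\{m,n\}=n$, the point $q$ is singular exactly when this rank drops below $n$, that is, exactly when $x_n=\dots=x_m=0$. Hence, inside $U$, the singular set is the coordinate plane $\{x_n=\dots=x_m=0\}$, which is visibly a smooth submanifold of $U$ of dimension $n-1$ --- an open piece of a linear subspace --- carrying no boundary, since $X$ itself has none. As $S(c)$ is intrinsically defined and ``being a smooth $(n-1)$-dimensional submanifold of $X$ with empty boundary'' can be checked near each point, these charts show that $S(c)$ has exactly that structure. For closedness I would invoke the lower semicontinuity of $q\mapsto\operatorname{rank}{dc}_q$: the set $\{q\in X\mid\operatorname{rank}{dc}_q\ge n\}$ is open and its complement is precisely $S(c)$, so $S(c)$ is closed in $X$.

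For the final assertion, in the same chart the restriction of $c$ to $S(c)\cap U$ is the map $(x_1,\dots,x_{n-1})\mapsto (x_1,\dots,x_{n-1},0)$, whose Jacobian has rank $n-1=\dim S(c)$; since being an immersion is a local property, $c|_{S(c)}$ is an immersion everywhere. I do not expect a substantial obstacle here: no gluing or compatibility argument is needed, because every property involved is local, and --- worth noting --- the definiteness of the quadratic part (the adjective ``special'') plays no role, the identical computation handling arbitrary fold maps. The only points deserving a little care are recording that $\min\{\dim X,\dim Y\}=\dim Y$ from the hypothesis $m\ge n$, and citing semicontinuity of the rank for the closedness.
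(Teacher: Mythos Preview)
The paper does not actually prove Proposition~\ref{prop:1}; it is stated as a standard fact, with the surrounding text deferring to \cite{golubitskyguillemin} and \cite{saeki} for the general theory of fold maps. Your argument is correct and is precisely the standard one: the local normal form exhibits the singular set as a coordinate slice, the Jacobian computation shows it is cut out cleanly, lower semicontinuity of the rank gives closedness, and the same chart shows the restriction is an immersion. Your closing remark that the definiteness of the quadratic part plays no role matches the paper's own observation that the proposition holds for arbitrary fold maps.

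One point worth flagging: the proposition as printed in the paper says the singular set has dimension $\dim X-1$, but your computation (correctly) gives dimension $n-1=\dim Y-1$. This is a typo in the paper (as is the phrase ``of the manifold $c$'', which should read ``of the manifold $X$''); your proof establishes the intended statement.
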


Note that {\it fold} maps are defined as higher dimensional versions of Morse functions in a similar way. They also satisfy Proposition \ref{prop:1}. Special generic maps are fold maps of course. In our study, we do not concentrate on general fold maps. See \cite{golubitskyguillemin} and \cite{saeki} for general theory of fold maps for example. 

 In the present paper, we concentrate mainly on special generic maps into ${\mathbb{R}}^n$ ($n \geq 5$) on closed and simply-connected manifolds whose dimensions are at least $6$.

  Special generic maps on homotopy spheres, special generic maps into ${\mathbb{R}}^n$ ($n \leq 3$) on closed and simply-connected manifolds whose dimensions are at least $4$ and special generic maps into ${\mathbb{R}}^n$ ($n=4$) on $5$-dimensional closed and simply-connected manifolds are studied in \cite{saeki}, followed by \cite{nishioka}. This will be presented in section \ref{sec:2}.

  We present Main Theorems. In our new study, the torsion subgroups of integral homology groups and cohomology rings are key objects. We also explain about some terminologies and notions and the notation on fundamental algebraic topology in section \ref{sec:2}. However, we also assume that we already have elementary knowledge about them. Hereafter, "$\cong$" between two algebraic systems means that they are isomorphic as algebraic systems in the class we consider and "$\oplus$" is for direct sums.
\begin{MThm}
\label{mthm:1}
Let $k_1$ $k_2$, $k$ and $n$ be positive integers satisfying the following relations{\rm :} $k_1<\frac{n}{2}$, $k_1+k_2=n$ and $k_1 \leq k \leq \frac{n}{2}$.
Let $m>n$ be an integer satisfying the relation $k_1,n-k_1-1 \leq m-n+1$.
Then there exist families of countably many closed and connected manifolds $\{{M}_{r}\}_{r \in \mathbb{Z}}$ and special generic maps $\{{f}_{r}: M_{r} \rightarrow {\mathbb{R}}^n\}$ and the families enjoy the following three properties.
\begin{enumerate}
\item 
\label{mthm:1.1}
For each map $f_r$, the restriction to the singular set is an embedding and the image is diffeomorphic to a manifold obtained from $S^{k_1} \times D^{n-k_1}=S^{k_1} \times D^{k_2}$ by removing the interior of a small closed tubular neighborhood of a smooth closed submanifold in $S^{k_1} \times {\rm Int}\ D^{k_2}$ diffeomorphic to $S^{k_1} \times S^{k-k_1}$ in the case $k>k_1$ and $S^{k_1}$ in the case $k=k_1$.
Furthermore, the image $D_{r}$ enjoys the following properties.
\begin{enumerate}
\item \label{mthm:1.1.1}
 $H_{j}(D_{r};\mathbb{Z}) \cong \mathbb{Z}$ for $j=0,n-1$ and $H_{j}(D_{r};\mathbb{Z}) \cong \{0\}$ for $0<j<k_1$.
\item \label{mthm:1.1.2}
 For $j \neq 0,k_1,k-k_1,n-k-1,n-k+k_1-1,k,n-k_1-1,n-1$, $H_{j}(D_{r};\mathbb{Z}) \cong \{0\}$. 
\item \label{mthm:1.1.3}
Let $j=k_1,k-k_1,n-k-1,n-k+k_1-1,k,n-k_1-1$. $H_j(D_{r};\mathbb{Z})$ is free. In the case $j=k_1,n-k-1,n-k+k_1-1,n-k_1-1$, it is not the trivial group. In the case $j=k-k_1,k$, satisfying $j \notin \{k_1,n-k-1,n-k+k_1-1,n-k_1-1\}$, it is the trivial group. 
\item \label{mthm:1.1.4}
$D_{r}$ is simply-connected if $k_1 >1$.
\end{enumerate}
Furthermore, if $m$ is sufficiently large, then we can know the homology group of $M_{r}$ completely from $D_{r}$ applying Poincar\'e duality theorem together with Proposition \ref{prop:3} {\rm (}\ref{prop:3.5}{\rm )}, saying that $H_j(M_{r};\mathbb{Z}) \cong H_j(D_{r};\mathbb{Z})$ holds for $0 \leq j \leq m-n$ and we can also know the homotopy groups ${\pi}_j(M) \cong {\pi}_j(D_r)$. 
\item
\label{mthm:1.2} If $r \neq 0$, then 
 $M_{r}$ admits no special generic maps into ${\mathbb{R}}^{n^{\prime}}$ for $1 \leq n^{\prime} <n$. 
 \item 
 \label{mthm:1.3}
 $M_r$ also admits a special generic map into ${\mathbb{R}}^{n^{\prime}}$ for $n \leq n^{\prime} \leq m$. 
 \item 
 \label{mthm:1.4}
 We can have the family $\{M_{r}\}_{r \in \mathbb{Z}}$ in such a way that
the integral cohomology rings of distinct manifolds $M_{r_1}$ and $M_{r_2}$ are not isomorphic for distinct integers $r_1, r_2 \in \mathbb{Z}$ if the relation $k_1,n-k_1-1 <m-n+1$ is satisfied.
\end{enumerate}

\end{MThm}
\begin{MThm}[Theorem \ref{thm:8}]
\label{mthm:2}
There exists a closed and simply-connected manifold of dimension $m>6$ admitting a special generic map into
${\mathbb{R}}^{n}$ for $5 \leq n \leq m$ and admitting no special generic maps into ${\mathbb{R}}^{n}$ for $1 \leq n \leq 4$.
\end{MThm}
Hereafter, (boundary) connected sums of (smooth) manifolds are considered in the smooth category unless otherwise stated. The {\it Stiefel-Whitney classes} and the {\it Pontrjagin classes} of a smooth manifold will be presented shortly in the third section. However, we also assume fundamental knowledge on them. See also \cite{milnorstasheff}.
\begin{MThm}
\label{mthm:3}
Let $l>0$ be an integer and $\{G_j\}_{j=0}^9$ a sequence of finitely generated commutative groups of length $10$ satisfying the following conditions.
\begin{itemize}
\item $G_j$ and $G_{m-j}$ are isomorphic for $0 \leq j \leq 9$ and $G_0$ is isomorphic to $\mathbb{Z}$ and $G_1$ is the trivial group. For $j \neq 2,6$, $G_j$ is free. The ranks of $G_4$ and $G_5$ at least $2l$.
\item The torsion subgroup of $G_2$ is isomorphic to a group represented as a direct sum of $2l$ copies of the group of order $2$.
\end{itemize}
Then, there exists a pair $(M_1,M_2)$ of $9$-dimensional closed and simply-connected manifolds enjoying the following properties.
\begin{enumerate}
\item \label{mthm:3.1}
 The $j$-th integral homology groups of $M_1$ and $M_2$ are isomorphic to $G_j$ for $0 \leq j \leq 9$.
\item \label{mthm:3.2}
The sets consisting of all elements whose orders are infinite and the zero elements of the integral cohomology rings of $M_1$ and $M_2$ are subalgebras isomorphic to the integral cohomology ring of a $9$-dimensional manifold, which is represented as a connected sum of products of standard spheres and presented in Example \ref{ex:1}.
\item \label{mthm:3.3}
Let $i=1,2$. $M_i$ admits a special generic map into ${\mathbb{R}}^{n}$ for $i+4 \leq n \leq 9$ whereas it admits no special generic maps into ${\mathbb{R}}^{n}$ for $1 \leq n \leq i+3$. Furthermore, we can construct the special generic map on $M_i$ so that the restriction to the singular set is an embedding.
\item \label{mthm:3.4}
 The $j$-th Stiefel-Whitney classes and the $j$-th Pontrjagin classes of $M_1$ and $M_2$ are the zero elements for any positive integer $j$. 

\end{enumerate}
\end{MThm}

The content of the present paper is as follows. In the next section, we review the structures of special generic maps and fundamental construction.  
The third section is devoted to expositions of Main Theorems including proofs. We also present Main Theorem \ref{mthm:4}, which is a variant of Main Theorem \ref{mthm:3} for the $8$-dimensional case as another main theorem.
\section{Structures of special generic maps.}
\label{sec:2}
In the present paper, a diffeomorphism on a manifold is assumed to be smooth and the {\it diffeomorphism group} of
the manifold is the group consisting of all diffeomorphisms on the manifold. This group is endowed with the so-called {\it Whitney $C^{\infty}$ topology}. A {\it smooth} bundle is a bundle whose fiber is a smooth manifold and whose structure group is the diffeomorphism group.

A {\it linear} bundle is a smooth bundle whose fiber is a unit disk (sphere) and whose structure group acts on the fiber as linear transformations.
\begin{Prop}[\cite{saeki2}]
\label{prop:2}
\begin{enumerate}
\item
\label{prop:2.1}
Let $f:M \rightarrow N$ be a special generic map from an $m$-dimensional closed manifold into an $n$-dimensional manifold with no boundary satisfying $m>n \geq 1$. We have an $n$-dimensional compact smooth manifold $W_f$ and $f$ is the composition of a smooth map $q_f:M \rightarrow W_f$ with a smooth immersion $\bar{f}:W_f \rightarrow N$.
Furthermore, for some small collar neighborhood $N(\partial W_f)$, the composition of the restriction to the preimage of the collar neighborhood with the canonical projection to the boundary gives a linear bundle whose fiber is the {\rm (}$m-n+1${\rm )}-dimensional unit disk $D^{m-n+1}$ and on the preimage of the complementary set of the interior of the collar neighborhood, $f$ gives a smooth bundle whose fiber is diffeomorphic to the unit sphere $S^{m-n}$. In particular, for example, it is a linear bundle whose fiber is the unit sphere $S^{m-n}$ for $m-n=1,2,3$.
\item
\label{prop:2.2}
For a smooth immersion ${\bar{f}}_N$ of an $n$-dimensional compact manifold $\bar{N}$ into an $n$-dimensional manifold $N$ with no boundary and for any integer $m>n$, there exist an $m$-dimensional closed manifold $M$ and a special generic map $f$ into $N$ such that $W_f=\bar{N}$ and $\bar{f}={\bar{f}}_N$ hold in the previous statement and that the bundle over $\partial N(\partial W_f)$ and the bundle over the complementary set $W_f-{\rm Int}\ N(\partial W_f)$ of the interior of $N(\partial W_f) \subset W_f$ in the previous statement are trivial. If $N$ is orientable {\rm (}connected{\rm )}, then we can construct $M$ as an orientable {\rm (}resp. a connected{\rm )} manifold. If $N$ is connected and orientable, then we can obtain $M$ satisfying both.

\end{enumerate}
\end{Prop}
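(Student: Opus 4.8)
For part~(\ref{prop:2.1}) the plan is to take for $W_f$ the \emph{Stein factorization} of $f$, i.e.\ the quotient $M/{\sim}$ of $M$ by the relation identifying two points that lie in the same connected component of a fibre $f^{-1}(y)$, together with the quotient map $q_f:M\to W_f$ and the induced map $\bar f:W_f\to N$ with $f=\bar f\circ q_f$. The heart of the argument is to equip $W_f$ with the structure of a compact smooth $n$-manifold with boundary, for which I would use the two local normal forms of a special generic map. Near a regular point, $f$ is locally the projection $\mathbb{R}^m\to\mathbb{R}^n$, whose local fibres are connected, so $W_f$ is locally $\mathbb{R}^n$. Near a singular point, $f$ has the form $(x_1,\dots,x_m)\mapsto(x_1,\dots,x_{n-1},\sum_{j=1}^{m-n+1}x_{n-1+j}^2)$, and the fibres through nearby points are the level sets $\{\sum_j x_{n-1+j}^2=c\}$, a single point for $c=0$ and, since $m-n\geq1$, a connected sphere $S^{m-n}$ for $c>0$; thus the local quotient is the half-space $\mathbb{R}^{n-1}\times[0,\infty)$, with the squared fibre-radius $c$ as the coordinate on the half-line factor. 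Once one checks that these charts assemble into a smooth atlas, $W_f$ is a compact smooth $n$-manifold with $\partial W_f=q_f(S(f))$, $q_f$ restricts to a diffeomorphism of $S(f)$ onto $\partial W_f$, and $\bar f$ is an immersion by the same local inspection.

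The bundle statements then follow by applying Ehresmann's fibration theorem to the two pieces. Fixing a collar $N(\partial W_f)\cong\partial W_f\times[0,1)$, the singular normal form presents $q_f$ over this collar, composed with the collar projection onto $\partial W_f$, as a $D^{m-n+1}$-bundle; its trivializations are built from the quadratic form $\sum_j x_{n-1+j}^2$, so its transition functions lie in $O(m-n+1)$ and it is linear. Over the compact manifold $W_f-{\rm Int}\,N(\partial W_f)$ the restriction of $q_f$ is a proper submersion with connected fibres, hence a smooth fibre bundle by Ehresmann, and comparison with the collar region shows its fibre is $S^{m-n}$. For the final clause I would invoke the classical computations that $\mathrm{Diff}(S^{m-n})$ is homotopy equivalent to $O(m-n+1)$ for $m-n\leq3$ (immediate for $S^1$, Smale for $S^2$, Hatcher for $S^3$): this reduces the structure group of the $S^{m-n}$-bundle to $O(m-n+1)$, making it linear.

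For part~(\ref{prop:2.2}) I would run the construction backwards. Fix a collar $\partial\bar N\times[0,1]\hookrightarrow\bar N$ and write $\bar N=\bar N_0\cup(\partial\bar N\times[0,1])$ with $\bar N_0$ the complement of the open collar, a compact $n$-manifold with $\partial\bar N_0\cong\partial\bar N$. Over $\bar N_0$ take the trivial bundle $A:=\bar N_0\times S^{m-n}$ with its projection $\pi_A$, and over the collar take $B:=\partial\bar N\times D^{m-n+1}$ with the map $\pi_B(z,y)=(z,\|y\|^2)\in\partial\bar N\times[0,1]\subset\bar N$. Since $\partial A=\partial\bar N\times S^{m-n}=\partial B$ and $\pi_A$, $\pi_B$ agree there, gluing $A$ to $B$ along this common boundary gives a closed smooth $m$-manifold $M$ and a smooth map $q_f:M\to\bar N$ restricting, over $\bar N_0$ and over the collar, to the prescribed trivial $S^{m-n}$- and $D^{m-n+1}$-bundles. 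Then I would set $f:=\bar f_N\circ q_f$; a direct differential computation gives $S(f)=\partial\bar N\times\{0\}$, where $q_f$ — hence $f$, since $\bar f_N$ is an immersion — is in the special generic normal form, so $f$ is special generic. Because $q_f$ has connected fibres (points or $S^{m-n}$) while $\bar f_N$, being an immersion of a compact $n$-manifold into an $n$-manifold, is a local diffeomorphism with finite, hence totally disconnected, fibres, the factorization $f=\bar f_N\circ q_f$ must be the Stein factorization, so $W_f=\bar N$ and $\bar f=\bar f_N$. Orientability and connectedness are then read off the construction: if $N$ is orientable, $\bar f_N$ pulls back an orientation, so $\bar N$ and $\partial\bar N$, hence $A$, $B$ and (after matching orientations along the seam) $M$, are orientable; and if $\bar N$ is connected, $M$ is connected because $q_f$ is a closed surjection with connected fibres over a connected base (when $N$ is merely connected one first chooses the immersion with connected domain, or joins the pieces of $M$ by connected sums), and carrying out both at once yields the last assertion.

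I expect the one genuinely delicate step to be the smoothing of $W_f$ in part~(\ref{prop:2.1}): checking that the charts $\mathbb{R}^n$ over regular values and $\mathbb{R}^{n-1}\times[0,\infty)$ over folds glue into a bona fide smooth structure — so that the boundary coordinate, which has to be taken as the squared fibre-radius, produces smooth transition maps — and deducing that $\bar f$ is a well-defined global immersion. Everything else is a standard application of Ehresmann's theorem, elementary bookkeeping, or, for the low-dimensional clause, a citation of the diffeomorphism-group theorems for $S^2$ and $S^3$.
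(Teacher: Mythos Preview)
The paper does not actually prove this proposition: it is stated with the attribution \cite{saeki2} and no proof is given in the present paper, so there is nothing here to compare your argument against line by line. That said, your outline is exactly the standard Stein-factorization argument that underlies Saeki's original result, and it is correct in its essentials.

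A few remarks on points you might sharpen. In part~(\ref{prop:2.1}), your identification of the delicate step is right: the smooth structure on $W_f$ does require care, and in practice one argues not by producing charts directly from the quotient but by observing that $q_f{\mid}_{S(f)}$ is a diffeomorphism onto its image and then using a tubular neighborhood of $S(f)$ together with the quadratic normal form to manufacture the boundary collar; your ``squared fibre-radius'' coordinate is the right idea but one must check it patches with the regular charts. For the linearity of the $D^{m-n+1}$-bundle, note that what one really uses is that the second-order jet of $f$ along $S(f)$ gives a well-defined nondegenerate quadratic form on the normal bundle of $S(f)$ in $M$, which is what supplies the $O(m-n+1)$-structure. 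In part~(\ref{prop:2.2}), your gluing construction is correct, but the parenthetical about connectedness is slightly off: the hypothesis is that $N$ is connected, not $\bar N$, and the proposition only asserts that $M$ can be taken connected; since $\bar N$ is given, one does not get to ``choose the immersion with connected domain'', so the honest argument when $\bar N$ is disconnected is to build $M$ component by component and then take a connected sum inside $N$ (which is possible since $\bar f_N$ is an immersion into the connected $N$), checking that the resulting map is still special generic with the same image. Otherwise your sketch is sound.
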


\begin{Ex}
	\label{ex:1}
	Canonical projections of unit spheres are special generic maps whose singular sets are equators and standard spheres and whose restrictions to the singular sets are embeddings.
	Let $l,m,n>0$ be integers satisfying $m \geq n$. A manifold represented as a connected sum of $l$ manifolds each of which is diffeomorphic to the corresponding manifold in $\{S^{k_j} \times S^{m-k_j}\}_{j=1}^l$ satisfying $1 \leq k_j <n$ admits a special generic map into ${\mathbb{R}}^n$ satisfying the following three.
	\begin{enumerate}
		\item The restriction to the singular set is an embedding.
		\item The image is a manifold represented as a boundary connected sum of $l$ manifolds each of which is diffeomorphic to the corresponding manifold in the family $\{S^{k_j} \times D^{n-k_j}\}_{j=1}^l$.
		\item This map is as in Proposition \ref{prop:2} (\ref{prop:2.2}).
	\end{enumerate}
\end{Ex}

\begin{Thm}[\cite{calabi, saeki2, saeki3, wrazidlo}]
	\label{thm:1}
	An exotic sphere of dimension $m>4$ admits no special generic maps into $\mathbb{R}^n$ for $n=m-3,m-2,m-1$. Homotopy spheres except $4$-dimensional exotic spheres, which are undiscovered, admit special generic maps into the plane.
	$7$-dimensional oriented homotopy spheres of at least 14 types of 28 types admit no special generic map into
	${\mathbb{R}}^3$.
\end{Thm}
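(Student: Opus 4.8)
The plan is to prove the three assertions separately, reducing each to the structural description of Proposition \ref{prop:2} (\ref{prop:2.1}): a special generic map $f\colon \Sigma^m\to\mathbb{R}^n$ on a homotopy sphere factors as $\Sigma^m\xrightarrow{q_f}W_f\xrightarrow{\bar f}\mathbb{R}^n$, where $W_f$ is a compact $n$-manifold immersed in $\mathbb{R}^n$; over a collar of $\partial W_f$ the map $q_f$ is a linear $D^{m-n+1}$-bundle, and over the complement it is a smooth $S^{m-n}$-bundle. Since $\Sigma^m$ is a homotopy sphere, $W_f$ is a homotopy $n$-disk, hence contractible (the homological comparison between $\Sigma^m$ and $W_f$; see \cite{saeki2} and Proposition \ref{prop:3}). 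When the interior sphere bundle is \emph{linear}, the two pieces assemble into $\Sigma^m=\partial E$, where $E$ is the total space of a linear $D^{m-n+1}$-bundle over $W_f$, so that $E\simeq W_f$ is contractible. This reconstruction is the engine for the nonexistence parts, and its failure (non-linearity) is exactly what lets exotic spheres appear in low target dimension.

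For the first assertion, take $n\in\{m-1,m-2,m-3\}$, so that $m-n\in\{1,2,3\}$. By the final sentence of Proposition \ref{prop:2} (\ref{prop:2.1}) the interior $S^{m-n}$-bundle is then linear, so the reconstruction applies and $\Sigma^m=\partial E$ with $E$ a compact contractible $(m+1)$-manifold. Since $m>4$ we have $m+1\geq 6$, and removing an open disk from $E$ exhibits an $h$-cobordism between $\Sigma^m$ and $S^m$; as both are simply connected of dimension $m\geq 5$, the $h$-cobordism theorem gives $E\cong D^{m+1}$ and hence $\Sigma^m\cong S^m$, contradicting exoticness. The same argument at $n=m-3$, $m=7$ recovers the nonexistence of special generic maps $\Sigma^7\to\mathbb{R}^4$ for exotic $\Sigma^7$.

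For the second assertion I would realize, for each homotopy sphere $\Sigma^m$ of dimension $m\neq 4$, a special generic map into $\mathbb{R}^2$ with $W_f=D^2$. For $1\le m\le 6$ with $m\neq 4$ one has $\Theta_m=0$, so $\Sigma^m$ is standard and the canonical projection of Example \ref{ex:1} suffices. For $m\geq 7$ I would write $\Sigma^m$ as a twisted sphere $D^m\cup_\phi D^m$ with $\phi\in\mathrm{Diff}^+(S^{m-1})$ supported in a disk, using Cerf's identification $\pi_0\,\mathrm{Diff}^+(S^{m-1})\cong\Theta_m$ (valid since $m-1\geq 6$). One then builds $\Sigma^m$ as the boundary of a $D^{m-1}$-bundle structure over $D^2$ whose monodromy around $\partial D^2=S^1$ realizes the twist $\phi$, the inner $S^{m-2}$-bundle being glued to the collar $D^{m-1}$-bundle by a clutching concentrated near $\partial D^2$; the projection $q_f$ followed by $D^2\hookrightarrow\mathbb{R}^2$ is special generic. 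Verifying that this clutching produces exactly $\Sigma^m$, rather than merely some homotopy sphere, is the technical heart here and is carried out in \cite{saeki2}.

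The third assertion is the hardest, and is where I expect the main obstacle. Here $m=7$, $n=3$, so $m-n=4$ and linearity is no longer automatic; $W_f=D^3$ by the $3$-dimensional Poincar\'e conjecture. The homotopy $7$-spheres realizable by such maps form the image of the clutching construction over $D^3$ inside $\Theta_7\cong\mathbb{Z}/28\mathbb{Z}$, and the task is to bound this image by $14$. The plan is to attach to each realizable $\Sigma^7$ an explicit compact spin $8$-manifold $E$ bounding it, assembled from the trivial piece $D^3\times D^5$ and a $D^4$-bundle over the collar $S^2\times I$ together with the possibly non-linear clutching, and then to compute the Eells--Kuiper invariant $\mu(\Sigma^7)\in\mathbb{Q}/\mathbb{Z}$ from the signature and Pontrjagin number of $E$ (cf. \cite{eellskuiper}). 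Because the base $D^3$ immerses in $\mathbb{R}^3$ and the structure data are severely constrained, so that only $\pi_1(SO(4))$ and the Gromoll-type contribution of $\pi_\ast\mathrm{Diff}(S^4)$ can enter, $\mu$ is forced into a proper subgroup of $\tfrac{1}{28}\mathbb{Z}/\mathbb{Z}$; hence at most $14$ of the $28$ oriented classes occur, and at least $14$ admit no special generic map into $\mathbb{R}^3$. The delicate points, namely the exact identification of the admissible clutchings and the resulting $\mu$-values, constitute the genuine difficulty and are treated in \cite{saeki2, saeki3, wrazidlo}.
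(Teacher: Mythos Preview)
The paper does not prove Theorem~\ref{thm:1}; it is quoted as a result from \cite{calabi, saeki2, saeki3, wrazidlo} with no argument supplied, so there is no ``paper's own proof'' to compare your attempt against. What you have written is a reasonable reconstruction of the arguments in those references, and the overall architecture (contractible $W_f$ plus linearity of the $S^{m-n}$-bundle for $m-n\le 3$ forces $\Sigma^m=\partial(\text{contractible})$, then $h$-cobordism; twisted-sphere clutching over $D^2$ for the existence into $\mathbb{R}^2$; an Eells--Kuiper type obstruction for the $\mathbb{R}^3$ case in dimension~$7$) matches the literature.

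Two places where your sketch is looser than you may realize. First, the sentence ``$W_f$ is a homotopy $n$-disk, hence contractible'' is not a direct consequence of Proposition~\ref{prop:3}~(\ref{prop:3.5}) when $n$ is close to $m$: for $n=m-1$ that proposition only yields $H_1(W_f)=0$, and one must combine Poincar\'e--Lefschetz duality on the $(m{+}1)$-manifold $W$ of Proposition~\ref{prop:3} with the fact that $W_f$ has the homotopy type of an $(n{-}1)$-complex to kill the remaining homology. This is carried out in \cite{saeki2}, so your citation is appropriate, but the parenthetical suggests it is more immediate than it is. Second, in the $\mathbb{R}^3$ case your description of the bounding $8$-manifold and the constraint on clutchings is schematic; the precise identification of which elements of $\Theta_7\cong\mathbb{Z}/28$ can arise (and hence the bound of~$14$) is exactly the content of \cite{saeki3, wrazidlo}, and your outline correctly flags this as the genuine difficulty rather than claiming to resolve it.
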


\begin{Thm}[\cite{saeki2,saekisakuma,saekisakuma2}]
	\label{thm:2}
	Let $m=3,4,5,6,7$.
	An $m$-dimensional closed and connected manifold admits a special generic map into the plane if and only if it is represented as a connected sum of the total spaces of smooth bundles over $S^1$ whose fibers are diffeomorphic to the unit sphere $S^{m-1}$.
	
	A $4$-dimensional closed and connected manifold whose fundamental group is free admits a special generic map into ${\mathbb{R}}^3$ if and only if it is a standard sphere or represented as a connected sum of manifolds satisfying either of the following two.
	Furthermore, there exist infinitely many $4$-dimensional closed and connected manifolds which are homeomorphic to these manifolds, admit fold maps into ${\mathbb{R}}^3$, and admit no special generic maps into ${\mathbb{R}}^3$.
	\begin{enumerate}
		\item The total space of a linear bundle over $S^1$ whose fiber is diffeomorphic to the unit sphere $S^3$.
		\item The total space of a linear bundle over $S^2$ whose fiber is diffeomorphic to the unit sphere $S^2$.
	\end{enumerate}
	A $5$-dimensional closed and simply-connected manifold admits a special generic map into ${\mathbb{R}}^3$ if and only if it is a standard sphere or represented as a connected sum of the total spaces of linear bundles over $S^2$ whose fibers are diffeomorphic to the unit sphere $S^3$.
\end{Thm}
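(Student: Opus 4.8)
The plan is to prove both implications of each biconditional, using the Stein factorization of Proposition~\ref{prop:2}~(\ref{prop:2.1}) as the organizing tool. For a special generic map $f\colon M\to\mathbb{R}^n$ one has $f=\bar f\circ q_f$ with $W_f$ a compact $n$-manifold immersed in $\mathbb{R}^n$ and, since $m>n$ forces $S(f)\neq\emptyset$, with $\partial W_f\neq\emptyset$; moreover $M$ is diffeomorphic to the boundary of a $D^{m-n+1}$-bundle over $W_f$, obtained by gluing the linear disk-bundle over a collar of $\partial W_f$ to the $S^{m-n}$-bundle over the complement (which is itself linear when $m-n\le 3$, in particular in all cases relevant here for $n=3$). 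Conversely, Proposition~\ref{prop:2}~(\ref{prop:2.2}) builds $M$ and $f$ from any chosen immersed $W_f$, so the problem reduces to matching the manifolds in the statement with the possible diffeomorphism types of $W_f$ and of the bundle over it.

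For the ``if'' directions I would argue as in Example~\ref{ex:1}: realize the relevant base as an immersed compact manifold in the target --- a $2$-disk with $1$-handles attached for $n=2$, and a $3$-ball with further $1$- and $2$-handles (equivalently $S^3$ with finitely many open balls removed, with extra handles) for $n=3$ --- and apply Proposition~\ref{prop:2}~(\ref{prop:2.2}). The key point is that passing from a handle body to the result of attaching one $i$-handle changes the boundary of the thickened manifold by a connected sum with the total space of a linear $S^{m-i}$-bundle over $S^i$, while the empty handle body ($W_f=D^n$) yields $S^m$. For $n=2$ this gives exactly the connected sums of $S^{m-1}$-bundles over $S^1$; for $n=3$ it gives the connected sums listed in the $4$- and $5$-dimensional statements.

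For the ``only if'' directions one analyses $W_f$. Since the point-preimages of $q_f$ are points, $(m-n)$-spheres, or $(m-n+1)$-disks, the homology and fundamental group of $M$ are determined by those of $W_f$; in particular $\pi_1(W_f)$ is a quotient of $\pi_1(M)$, and equal to it when $m-n\ge 2$. When $n=2$ and $3\le m\le 7$, $W_f$ is a compact surface with nonempty boundary, hence built from a single $0$-handle and some $1$-handles; thickening this to a handle decomposition of the $(m+1)$-manifold with boundary $M$ and inducting on the number of $1$-handles exhibits $M$ as a connected sum of $S^{m-1}$-bundles over $S^1$ (the dimension bound being what keeps the sphere-bundle fibers identifiable with genuine unit spheres). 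When $n=3$, $m=5$, and $M$ is simply connected, $\pi_1(W_f)=1$, so ``half lives, half dies'' forces $\partial W_f$ to be a disjoint union of $2$-spheres; capping them off and invoking the Poincar\'e conjecture, $W_f$ is $S^3$ with finitely many open balls removed, which admits a handle decomposition with one $0$-handle and finitely many $2$-handles, and the same thickening argument turns each $2$-handle into a linear $S^3$-bundle over $S^2$ summand, with $S^5$ when there is none. When $n=3$, $m=4$, and $\pi_1(M)$ is free, $W_f$ is a compact $3$-manifold with nonempty boundary whose fundamental group surjects onto a free group, and a suitably arranged handle decomposition of $W_f$ produces, via thickening, connected summands that are linear $S^3$-bundles over $S^1$ (from the $1$-handles) and linear $S^2$-bundles over $S^2$ (from the $2$-handles), with $D^3$ giving the standard sphere.

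The remaining clause --- the existence of infinitely many $4$-manifolds homeomorphic to the ones just characterized, admitting fold maps but no special generic maps into $\mathbb{R}^3$ --- I would handle separately: choose closed $4$-manifolds with free fundamental group sharing the intersection form of a connected sum of copies of $S^2\times S^2$ and $\mathbb{CP}^2\#\overline{\mathbb{CP}^2}$ but not diffeomorphic to any connected sum of linear sphere-bundles over spheres, which exist by Freedman's topological classification together with gauge-theoretic invariants obstructing such smooth connected-sum decompositions, and recall that every such $4$-manifold carries a fold map into $\mathbb{R}^3$ while the necessity analysis above rules out a special generic one. The step I expect to be hardest is this necessity direction in general: showing that the handle-by-handle reconstruction of $M$ from $W_f$ yields a genuine connected sum rather than a more tangled gluing, keeping the bundles under control (via the low-codimension linearity of Proposition~\ref{prop:2}~(\ref{prop:2.1}) and the classification of $\mathrm{SO}$-bundles over low-dimensional spheres), and, for the $4$-dimensional statements, handling $\pi_1$ carefully and marshalling the Freedman-plus-gauge-theory input for the non-existence examples.
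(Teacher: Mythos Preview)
The paper does not prove Theorem~\ref{thm:2}: it is stated as a known result with citations to \cite{saeki2,saekisakuma,saekisakuma2} and no proof is given. So there is no ``paper's own proof'' to compare your proposal against; the paper simply quotes the theorem as background.

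That said, your sketch is broadly faithful to the arguments in the cited references: the organizing principle is indeed the Stein factorization of Proposition~\ref{prop:2}, the ``if'' directions go via Proposition~\ref{prop:2}~(\ref{prop:2.2}) and handle-by-handle construction of $W_f$, and the ``only if'' directions proceed by classifying the possible $W_f$'s and then controlling the bundle over each handle. A few points where your outline is thinner than the actual proofs deserve mention. For $n=2$, the dimension restriction $m\le 7$ enters not only through sphere recognition but through control of the gluing diffeomorphisms (pseudo-isotopy/concordance of diffeomorphisms of $S^{m-1}$), which you allude to only obliquely. For $m=4$, $n=3$, the fundamental-group transfer is genuinely delicate since $m-n=1$ and the generic fiber is $S^1$; Saeki--Sakuma's argument requires more than ``$\pi_1(W_f)$ surjects onto a free group'' and a ``suitably arranged handle decomposition'' --- one must identify $W_f$ up to diffeomorphism and track the circle-bundle data carefully. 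Your final paragraph correctly flags this as the hard step, but the sketch as written does not yet contain the idea that closes it.
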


In the case where the manifold of the target is ${\mathbb{R}}^3$, more facts are shown in \cite{saeki2, saekisakuma}, for example.

Hereafter we introduce fundamental notions and the notation on algebraic topology. \cite{hatcher} gives a systematic exposition.

For a pair of two topological spaces $X$ and $X^{\prime} \subset X$ and a commutative group $A$, the $j$-th (co)homology group of the pair is denoted by $H_j(X,X^{\prime};A)$ (resp. $H^{j}(X,X^{\prime};A)$) where $A$ is the coefficient ring.
 If $X^{\prime}$ is the empty set, then this is the $j$-th homology (resp. cohomology) group of $X$ and denoted by $H_j(X;A)$ (resp. $H^j(X;A)$).
 
 For a topological space $X$, ${\pi}_j(X)$ denotes the $j$-th homotopy group. 
 
 Let $A$ be a commutative ring. Let $c:(X_1,{X_1}^{\prime})\rightarrow (X_2,{X_2}^{\prime})$ be a continuous map between pairs of topological spaces enjoying the relation ${X_i}^{\prime} \subset X_i$ for $i=1,2$ which enjoys the relation $c({X_1}^{\prime}) \subset {X_2}^{\prime}$. We introduce the notation of canonically defined homomorphisms as $c_{\ast}:H_j(X_1,{X_1}^{\prime};A) \rightarrow H_j(X_2,{X_2}^{\prime};A)$ and $c^{\ast}:H^j(X_2,{X_2}^{\prime};A) \rightarrow H^j(X_1,{X_1}^{\prime};A)$. Let ${X_1}^{\prime}={X_2}^{\prime}$ be the empty sets. Let $c_{\ast}:{\pi}_j(X_1) \rightarrow {\pi}_j(X_2)$ denote the homomorphism. 

For a topological space $X$ and a commutative ring $A$, $H^{\ast}(X;A)$ denotes the direct sum ${\oplus}_{j=1}^{\infty} H^j(X;A)$ of all $j$-th cohomology groups for $j \geq 0$. This is also the cohomology ring of $X$ with the coefficient ring $A$. This is the graded algebra where the product of the sequence $\{u_j\}_{j=1}^l \subset H^{\ast}(X;A)$ of $l>0$ elements is the {\it cup product} ${\cup}_{j=1}^l u_j$. $u_1 \cup u_2$ is for the case $l=2$.

For a pair of two topological spaces $X$ and $X^{\prime} \subset X$ and a commutative ring $A$, we can also define the {\it cup product} $u_1 \cup u_2$ for $u_1 \in H^{j_1}(X;A)$ and $u_2 \in H^{j_2}(X,X^{\prime};A)$ where $j_1$ and $j_2$ are arbitrary (non-negative) integers. 
The former cup products are for specific cases.
For a compact, connected and orientable manifold $X$, we can define the {\it bilinear form} for $X$ as the bilinear form mapping $(u_1,u_2) \in H^{j_1}(X;A) \times H^{j_2}(X,X^{\prime};A)$ to $u_1 \cup u_2 \in H^{j_1+j_2}(X,X^{\prime};A)$ where $(j_1,j_2)$ is an arbitrary pair of (non-negative) integers. This is important in Theorem \ref{thm:7} and Poincar\'e duality theorem for a compact, connected and orientable manifold $X$ whose boundary $\partial X$ is not empty for example. We also need no orientations for the manifolds and Poincar\'e duality theorem for them in the case where the coefficient ring $A$ is $\mathbb{Z}/2\mathbb{Z}$, the field of order $2$, for example.

If $A=\mathbb{Z}$, then such groups and graded algebras are said to be integral homology groups, cohomology groups and cohomology rings. If $A=\mathbb{Q}$, then such groups and graded algebras are said to be rational homology groups, cohomology groups and cohomology rings.   

We also explain about Poincar\'e duality again and the {\it cohomology duals} to some elements of homology groups in the third section.


\begin{Thm}[E.g. \cite{nishioka}]
	\label{thm:3}
	If a closed and simply-connected manifold of dimension $m>4$ admits a special generic map into ${\mathbb{R}}^4$, then its $2$nd integral homology group is free. Especially, a $5$-dimensional closed and connected manifold admits a special generic map into ${\mathbb{R}}^4$ if and only if it is represented as a connected sum of the total spaces of linear bundles over $S^2$ whose fibers are diffeomorphic to the unit sphere $S^3$.
\end{Thm}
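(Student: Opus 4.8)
The plan is to study the compact $4$-manifold $W_f$ attached to $f$ by Proposition~\ref{prop:2}~(\ref{prop:2.1}) and to transport its properties to $M$. Let $f\colon M\to\mathbb{R}^4$ be special generic on a closed simply-connected $M$ with $m:=\dim M>4$, and write $f=\bar f\circ q_f$ with $\bar f\colon W_f\to\mathbb{R}^4$ an immersion and $q_f\colon M\to W_f$ as in that proposition. Then $\partial W_f\neq\varnothing$: otherwise $f$ would be a submersion of the closed manifold $M$ onto $\mathbb{R}^4$, hence (Ehresmann) a trivial bundle $M\cong\mathbb{R}^4\times F$, impossible for closed $M$. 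Moreover $\bar f$ is a codimension-zero immersion, so $d\bar f$ identifies $TW_f$ with the trivial bundle $\bar f^{*}T\mathbb{R}^4$; thus $W_f$ is parallelizable, in particular orientable with $w_2(W_f)=0$.

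Write $W:=W_f$. First, $H_1(W;\mathbb{Z})=0$: for $m\geq 6$ the bundle structure of $q_f$ makes it an isomorphism on $H_j(\,\cdot\,;\mathbb{Z})$ for $j\leq m-5$, so $H_1(W;\mathbb{Z})\cong H_1(M;\mathbb{Z})=0$; for $m=5$ one has $M\cong\partial E$ for the linear $D^{2}$-bundle $E$ over $W$ of Proposition~\ref{prop:2}~(\ref{prop:2.1}), and $\pi_1(W)\cong\pi_1(E)\cong\pi_1(\partial E)=\pi_1(M)=1$ because $E$ is a compact $6$-manifold (cf.\ \cite{saeki2,nishioka}). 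Secondly, any compact $4$-manifold $W$ with $\partial W\neq\varnothing$ and $H_1(W;\mathbb{Z})=0$ has $H_2(W;\mathbb{Z})$ free: the exact sequence of $(W,\partial W)$ gives $H_1(W,\partial W;\mathbb{Z})\cong\widetilde H_0(\partial W;\mathbb{Z})$, a free group, so by Lefschetz duality $H^{3}(W;\mathbb{Z})\cong H_1(W,\partial W;\mathbb{Z})$ is free, whence the universal-coefficient summand $\operatorname{Ext}(H_2(W;\mathbb{Z}),\mathbb{Z})$ of $H^{3}(W;\mathbb{Z})$ vanishes. We also record that $W$, being a compact $4$-manifold with boundary, is homotopy equivalent to a complex of dimension $\leq 3$, so $H^{j}(W;\mathbb{Z})=0$ for $j\geq 4$.

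Transporting this to $M$ needs a short case analysis. For $6\leq m\leq 7$, $M\cong\partial E$ with $E$ the linear $D^{m-3}$-bundle over $W$, and the exact sequence of $(E,M)$ combined with Lefschetz duality ($H_k(E,M;\mathbb{Z})\cong H^{m+1-k}(W;\mathbb{Z})$) and $H^{\geq 4}(W;\mathbb{Z})=0$ yields $H_2(M;\mathbb{Z})\cong H_2(W;\mathbb{Z})$. For $m\geq 8$ the same isomorphism holds because $q_f$ is a homology isomorphism in degree $2\leq m-5$. For $m=5$ the $(E,M)$-sequence leaves only $H_2(M;\mathbb{Z})\cong H^{3}(W;\mathbb{Z})\oplus H_2(W;\mathbb{Z})$, using that the map $H_3(E;\mathbb{Z})\to H_3(E,M;\mathbb{Z})\cong H^{3}(W;\mathbb{Z})$ vanishes (it factors through cup products of relative one-dimensional classes, and $H^{1}(W;\mathbb{Z})=0$). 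In every case $H_2(M;\mathbb{Z})$ is free, proving the first assertion.

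For the equivalence when $m=5$ (stated for simply-connected $M$, parallel to Theorem~\ref{thm:2}): the ``if'' direction is immediate from Example~\ref{ex:1} with $n=4$ and $k_j=2$. For ``only if'', the previous paragraphs show $M$ is a closed simply-connected $5$-manifold with $H_2(M;\mathbb{Z})$ free, and $M$ is spin: with $E=D(\eta)$, $\eta$ the rank-$2$ bundle over $W$, one has $w_2(M)=q_f^{*}w_2(\eta)$, and $w_2(\eta)$ lies in the image of the homomorphism $H^0(W;\mathbb{Z}/2)\to H^2(W;\mathbb{Z}/2)$ given by cup product with the mod-$2$ Euler class $\bar e(\eta)$, which is $\ker q_f^{*}$ by the Gysin sequence of $E$. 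By the classification of closed simply-connected $5$-manifolds (Smale, Barden), $M$ is therefore diffeomorphic to a connected sum of $\operatorname{rank}H_2(M;\mathbb{Z})$ copies of $S^2\times S^3$, hence to a connected sum of linear $S^3$-bundles over $S^2$. The crux is the second paragraph --- that being the image of a codimension-zero immersion forces $W_f$ to be parallelizable with $H_1=0$, and that a compact $4$-manifold with boundary and trivial $H_1$ has torsion-free $H_2$; this is exactly what kills torsion. The case $m=5$ is the delicate one, where $q_f$ carries too little homological information and one must use the presentation $M\cong\partial E$ over $W_f$ directly and, for the classification, invoke Smale--Barden.
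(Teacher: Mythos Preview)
Your overall strategy --- show $H_1(W_f)=0$, deduce $H_2(W_f)$ free via Lefschetz duality and universal coefficients (this is exactly Proposition~\ref{prop:6}), then transport to $M$ and invoke Smale--Barden --- matches the approach the paper indicates after Proposition~\ref{prop:6}. But there are real gaps.

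First, a minor slip: Proposition~\ref{prop:3}~(\ref{prop:3.5}) gives isomorphisms for $0\le j\le m-n=m-4$, not $m-5$; so $H_2(M)\cong H_2(W_f)$ holds directly for every $m\ge 6$ and your separate treatment of $m=6,7$ is unnecessary. Relatedly, Propositions~\ref{prop:2} and~\ref{prop:3} do not present $M$ as the boundary of a disk bundle $D(\eta)$ over $W_f$: the disk-bundle structure lives only over a collar of $\partial W_f$, while over the complement $q_f$ is merely a sphere bundle. What you may legitimately use is $M=\partial W$ with $W\simeq W_f$ from Proposition~\ref{prop:3}.

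The genuine error is your spin argument for $m=5$. You apply the Gysin sequence as though $q_f\colon M\to W_f$ were the circle bundle $S(\eta)$, but since $\partial W_f\neq\varnothing$ the boundary of $D(\eta)$ is not $S(\eta)$, and $q_f$ is not a fiber bundle over all of $W_f$. In fact the conclusion ``$M$ is spin'' is false: the nontrivial linear $S^3$-bundle $S^2\tilde\times S^3$ is non-spin yet admits a special generic map into $\mathbb{R}^4$ (take $W_f=S^2\times D^2$ and use the $S^1$-bundle of Euler class~$1$ over the complement of the collar). The fix is simply to drop the spin claim: once $M$ is simply-connected with $H_2(M;\mathbb{Z})$ free, Barden's classification already yields that $M$ is either $\#^r(S^2\times S^3)$ or $(S^2\tilde\times S^3)\,\#\,\#^{r-1}(S^2\times S^3)$, and both are connected sums of linear $S^3$-bundles over $S^2$. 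Your justification for the vanishing of $H_3(E)\to H_3(E,M)$ when $m=5$ is also opaque; a cleaner reason is that the generators of $H_3(W_f)$ are carried by parallel copies of the components of $\partial W_f$, which visibly bound relative to $\partial W$.
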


An ({\it $m$-dimensional}) {\it rational homology sphere} is a closed manifold whose $j$-th rational homology group is isomorphic to that of $S^m$ for any integer $j$.
\begin{Thm}[\cite{wrazidlo2}]
	\label{thm:4}
	Let $k>2$ and $1 \leq n < 2k+1$ be integers. If a {\rm (}$2k+1${\rm )}-dimensional rational homology sphere admits a special generic map into ${\mathbb{R}}^n$, then the $k$-th integral homology group must be finite and the order is the square of some integer.
\end{Thm}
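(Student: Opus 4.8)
The plan is to reduce the statement to the structure theorem for special generic maps, Proposition~\ref{prop:2}~(\ref{prop:2.1}), together with the homological splitting attached to it (Proposition~\ref{prop:3}), and then to finish by a short Lefschetz‑duality computation on the quotient manifold $W_f$.

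First I would set $m:=2k+1$ and $q:=m-n$, note $q\geq 1$, fix a special generic map $f\colon M\to\mathbb{R}^n$ on a $(2k+1)$-dimensional rational homology sphere $M$, and factor $f=\bar f\circ q_f$ through the compact $n$-dimensional manifold $W_f$ with $\bar f\colon W_f\to\mathbb{R}^n$ a smooth immersion, as in Proposition~\ref{prop:2}~(\ref{prop:2.1}). Two facts get recorded immediately: since $\bar f$ is an immersion of an $n$-manifold into an $n$-manifold, $d\bar f$ is a fibrewise isomorphism $TW_f\cong\bar f^{\ast}T\mathbb{R}^n$, so $W_f$ is parallelizable, hence orientable; and $\partial W_f=q_f(S(f))\neq\emptyset$, because a closed manifold admits no submersion onto $\mathbb{R}^n$ with $n\geq 1$. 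I will use the splitting
\[
H_j(M;\mathbb{Z})\;\cong\;H_j(W_f;\mathbb{Z})\,\oplus\,H_{j-q}(W_f,\partial W_f;\mathbb{Z})\qquad(j\in\mathbb{Z}),
\]
with the convention that the second summand is trivial for $j<q$. This is the content of Proposition~\ref{prop:3} (whose part~(\ref{prop:3.5}), $H_j(M;\mathbb{Z})\cong H_j(W_f;\mathbb{Z})$ for $0\leq j\leq m-n$, records exactly the range $j\leq q$; see also \cite{saeki2}); rationally it is immediate from Proposition~\ref{prop:2}~(\ref{prop:2.1}) by a Mayer--Vietoris argument over the decomposition of $M$ into the disk-bundle collar and the $S^{q}$-bundle part, using the Leray--Hirsch theorem for those sphere bundles.

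The first step is to check that $W_f$ is a rational homology ball, i.e.\ $H_i(W_f;\mathbb{Q})=0$ for all $i>0$. Since $M$ is a rational homology sphere, $H_j(M;\mathbb{Q})=0$ for $0<j<2k+1$, so the rational form of the splitting exhibits $H_i(W_f;\mathbb{Q})$ as a direct summand of $0$ for those $i$; for $i\geq 2k+1>n=\dim W_f$ it vanishes trivially, so $H_i(W_f;\mathbb{Q})=0$ for every $i>0$, and in particular $H_k(W_f;\mathbb{Z})$ and $H_{k+1}(W_f;\mathbb{Z})$ are finite. The second step computes $H_k(M;\mathbb{Z})$, which is finite since $H_k(M;\mathbb{Q})=0$. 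By the integral splitting, $H_k(M;\mathbb{Z})\cong H_k(W_f;\mathbb{Z})\oplus H_{k-q}(W_f,\partial W_f;\mathbb{Z})$ with $k-q=n-k-1$, and I would identify the two summands. Lefschetz duality for the compact oriented $n$-manifold $W_f$ gives $H_{k-q}(W_f,\partial W_f;\mathbb{Z})\cong H^{\,n-(k-q)}(W_f;\mathbb{Z})=H^{k+1}(W_f;\mathbb{Z})$, and, because $H_{k+1}(W_f;\mathbb{Z})$ is finite (so contributes no free part) while $H_k(W_f;\mathbb{Z})$ is finite, the universal coefficient theorem gives $H^{k+1}(W_f;\mathbb{Z})\cong\operatorname{Ext}^{1}_{\mathbb{Z}}(H_k(W_f;\mathbb{Z}),\mathbb{Z})\cong H_k(W_f;\mathbb{Z})$; for $n\leq k+1$ all of these groups are trivial anyway. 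Hence $H_k(M;\mathbb{Z})\cong H_k(W_f;\mathbb{Z})\oplus H_k(W_f;\mathbb{Z})$, a finite group of order $|H_k(W_f;\mathbb{Z})|^{2}$, which is the square of an integer, as required.

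The main obstacle lies on the input side rather than in this bookkeeping: one must justify the \emph{integral} splitting $H_{\ast}(M;\mathbb{Z})\cong H_{\ast}(W_f;\mathbb{Z})\oplus H_{\ast-q}(W_f,\partial W_f;\mathbb{Z})$ in degrees beyond the range $j\leq m-n$ of Proposition~\ref{prop:3}~(\ref{prop:3.5}) — the degree $k$ of interest exceeds $q$ precisely when $n>k+1$, which is exactly the regime in which $H_k(M;\mathbb{Z})$ can be non-trivial. This comes down to controlling the $S^{q}$-bundle part of Proposition~\ref{prop:2}~(\ref{prop:2.1}) over $\mathbb{Z}$, essentially an orientability statement for that bundle over the parallelizable base $W_f$, which is where the finer analysis of special generic maps into Euclidean spaces is needed; in fact only the \emph{orders} of the groups in the associated long exact sequence enter the conclusion, so a non‑split version would already suffice. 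Once that input is available the computation above is routine, and it does not obviously use the hypothesis $k>2$, which presumably reflects a different route in the original reference.
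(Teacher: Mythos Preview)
The paper does not prove Theorem~\ref{thm:4}; it is quoted from \cite{wrazidlo2} without argument, so there is no proof in the present paper to compare yours against.

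On its own merits your sketch has the right architecture, but the gap you yourself flag is genuine and not a technicality. The direct-sum decomposition $H_j(M;\mathbb{Z})\cong H_j(W_f;\mathbb{Z})\oplus H_{j-q}(W_f,\partial W_f;\mathbb{Z})$ is \emph{not} supplied by Proposition~\ref{prop:3}: part~(\ref{prop:3.5}) only yields the isomorphism $H_j(M;\mathbb{Z})\cong H_j(W_f;\mathbb{Z})$ in the range $0\le j\le m-n$, and the full splitting appears in the paper only in Theorem~\ref{thm:7}, under the extra hypotheses that the map is product-organized (Definition~\ref{def:1}) and that each $H_j(W_f;A)$ is free --- neither of which you may assume here. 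Your Leray--Hirsch appeal for the rational version is likewise not grounded in what the paper provides: by Proposition~\ref{prop:2}~(\ref{prop:2.1}) the $S^{m-n}$-bundle over $W_f-{\rm Int}\ N(\partial W_f)$ is merely a smooth bundle (linear only for $m-n\le 3$), and nothing here furnishes a class restricting to a fibre generator. In particular, your deduction that $W_f$ is a rational homology ball already presupposes the rational splitting, so the argument is circular at that step.

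Your closing remark is the right instinct. One should work instead with the $(m{+}1)$-manifold $W$ of Proposition~\ref{prop:3} satisfying $\partial W=M$ and $W\simeq W_f$, use the long exact sequence of $(W,\partial W)$ together with Lefschetz duality $H_j(W,\partial W)\cong H^{\,m+1-j}(W_f)$, and read off the equality of orders from exactness, with no splitting required. Carrying this out rigorously still demands verifying that $W$ is orientable (Proposition~\ref{prop:3} constructs it only in the PL or topological category in general), establishing that $H_i(W_f;\mathbb{Q})=0$ for all $i>0$ without assuming the splitting, and controlling the boundary maps in degrees adjacent to $k$; these are the substantive points addressed in \cite{wrazidlo2} and are not consequences of the statements recorded in this paper.
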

The following theorem is a key ingredient in the proof of Main Theorems of the present paper.
\begin{Thm}[\cite{kitazawa14}]
	\label{thm:5}
	Let $f:M \rightarrow N$ be a special generic map from an $m$-dimensional closed manifold into an $n$-dimensional connected non-closed manifold $N$ with no boundary satisfying $m>n \geq 1$.
	 Let $A$ be a commutative ring.

	 Assume that there exists a sequence $\{a_j \in H^{\ast}(M;A)\}_{j=1}^l$ whose degrees are at most $m-n$ of length $l>0$ and that the sum of the degrees of these $l$ elements is at least $n$. Then, the cup product ${\cup}_{j=1}^l a_j$ is the zero element. 
\end{Thm}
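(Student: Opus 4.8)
The plan is to use the canonical factorization of a special generic map supplied by Proposition \ref{prop:2} and to reduce the vanishing of ${\cup}_{j=1}^{l} a_j$ to the fact that the base of that factorization carries no cohomology in degrees at least $n$.

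By Proposition \ref{prop:2} (\ref{prop:2.1}) we may write $f = \bar{f} \circ q_f$, where $q_f \colon M \to W_f$ is a smooth map onto a compact $n$-dimensional manifold $W_f$ and $\bar{f} \colon W_f \to N$ is a smooth immersion. First I would observe that $\partial W_f \neq \emptyset$: we may assume $M \neq \emptyset$, hence $W_f \neq \emptyset$, and if $\partial W_f$ were empty then $\bar{f}$ would be an open map, so that $\bar{f}(W_f)$ would be a non-empty compact, hence closed, and open subset of the connected manifold $N$, forcing $\bar{f}(W_f) = N$ and contradicting that $N$ is not closed. A compact $n$-dimensional manifold with non-empty boundary has the homotopy type of a CW complex of dimension at most $n-1$ (for instance it admits a handle decomposition with no handle of index $n$), so $H^{j}(W_f; A) = 0$ for every $j \geq n$; in particular every cup product of elements of $H^{\ast}(W_f; A)$ of total degree at least $n$ is the zero element.

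The principal ingredient is that $q_f^{\ast} \colon H^{j}(W_f; A) \to H^{j}(M; A)$ is surjective for $0 \leq j \leq m-n$ (it is in fact bijective in this range; this is the cohomological counterpart of Proposition \ref{prop:3} (\ref{prop:3.5}), compare \cite{kitazawa14, saeki2, nishioka}). Here one uses the precise structure of $q_f$ from Proposition \ref{prop:2} (\ref{prop:2.1}): over $W_f - {\rm Int}\ N(\partial W_f)$ the map $q_f$ is a smooth bundle with fibre $S^{m-n}$, over the collar $N(\partial W_f)$ it is a linear bundle with fibre $D^{m-n+1}$, and $M$ is the union of the two total spaces glued along the $S^{m-n}$-bundle over $\partial W_f$. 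Since $S^{m-n}$ is $(m-n-1)$-connected, the Serre spectral sequence of the $S^{m-n}$-bundle (which has only the two rows $q = 0$ and $q = m-n$, the latter possibly with twisted coefficients) shows the bundle projection induces isomorphisms on $H^{j}$ for $j < m-n$; and in degree $m-n$ the only cohomology class of the total space not pulled back from the base is a fibre class, which restricts to a generator of the cohomology of each fibre of the boundary $S^{m-n}$-bundle and hence does not extend over the $D^{m-n+1}$-bundle on the collar, so a Mayer--Vietoris comparison of this decomposition of $M$ with the decomposition $W_f = (W_f - {\rm Int}\ N(\partial W_f)) \cup N(\partial W_f)$ shows $q_f^{\ast}$ is onto in degree $m-n$ as well. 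I expect this to be the main obstacle: the delicate case is the top degree $j = m-n$, and running the argument for the general (possibly non-linear, possibly non-orientable) sphere bundle, rather than only for the trivial bundle of Proposition \ref{prop:2} (\ref{prop:2.2}), requires some care with the coefficient system.

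Finally I would combine the two. Given a sequence $\{a_j \in H^{\ast}(M; A)\}_{j=1}^{l}$ with $\deg a_j \leq m-n$ for every $j$ and $\sum_{j=1}^{l} \deg a_j \geq n$, the surjectivity above lets us choose $b_j \in H^{\deg a_j}(W_f; A)$ with $q_f^{\ast}(b_j) = a_j$. Since $q_f^{\ast}$ is a homomorphism of graded rings, ${\cup}_{j=1}^{l} a_j = q_f^{\ast}\bigl({\cup}_{j=1}^{l} b_j\bigr)$, while ${\cup}_{j=1}^{l} b_j$ lies in $H^{\sum_{j} \deg a_j}(W_f; A)$, which vanishes because $\sum_{j} \deg a_j \geq n$. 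Hence ${\cup}_{j=1}^{l} a_j = 0$.
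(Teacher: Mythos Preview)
Your proof is correct and follows essentially the same route as the paper: lift each $a_j$ to a class on the $n$-dimensional quotient, use that this quotient has nonempty boundary (hence the homotopy type of an $(n-1)$-complex), and conclude that the cup product already vanishes there. The only cosmetic difference is that the paper factors through the inclusion $i\colon M=\partial W\hookrightarrow W$ of Proposition~\ref{prop:3} and then invokes the retraction $W\simeq W_f$, whereas you work directly with $q_f\colon M\to W_f$ and supply an independent spectral-sequence/Mayer--Vietoris sketch for the surjectivity of $q_f^{\ast}$ in degrees $\leq m-n$; since $q_f=r\circ i$ with $r$ a homotopy equivalence, the two packagings are equivalent.
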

\begin{Prop}[\cite{kitazawa1, kitazawa2, kitazawa3, kitazawa4, saeki, saeki2, saekisuzuoka}]
	\label{prop:3}
	In the situation of Proposition \ref{prop:2}, we can construct an {\rm (}$m+1${\rm )}-dimensional compact manifold $W$                                                                                                                                                                                                                              in the topology category {\rm (}PL category, or equivalently, in the piecewise smooth category,{\rm )} enjoying the following properties.
	\begin{enumerate}
		\item \label{prop:3.1}
		There exists a continuous {\rm (}resp. PL or piecewise smooth{\rm )} map $r:W \rightarrow W_f$.
		\item \label{prop:3.2}
		$r$ is a deformation retract and in the PL or piecewise smooth category, this gives a collapsing of 
		$W$ to $W_f$.
		\item \label{prop:3.3}
		 $M$ is the boundary $\partial W$ of $W$ where in the case of the PL or piecewise smooth category we discuss in the PL or piecewise smooth category regarding $M$ as a PL or piecewise smooth manifold in a canonical way. 
		\item \label{prop:3.4}
		 $q_f=r {\mid}_{\partial W}=r {\mid}_M$.
		\item \label{prop:3.5}
		 For the inclusion $i:M = \partial W \rightarrow W$, $i_{\ast}:H_j(M;A) \rightarrow H_j(W;A)$, $i_{\ast}:{\pi}_j(M;A) \rightarrow {\pi}_j(W;A)$ and $i^{\ast}:H^j(W;A) \rightarrow H^j(M;A)$ are isomorphisms for any integer $0 \leq j \leq m-n$ and any commutative group $A$.
	\end{enumerate}
	
	Furthermore, for example in the case $m-n=1,2,3$, we can always discuss this in the smooth category.
	Furthermore, we can construct a special generic map in Proposition \ref{prop:2} {\rm (}\ref{prop:2.2}{\rm )} so that we can discuss this in the smooth category.
\end{Prop}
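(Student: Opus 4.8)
The plan is to build $W$ by fibrewise coning the canonical decomposition of $q_f$ supplied by Proposition \ref{prop:2} (\ref{prop:2.1}). Recall from there that over a collar $N(\partial W_f)\cong\partial W_f\times[0,1]$ the map $q_f$ is the projection of a linear $D^{m-n+1}$-bundle over $\partial W_f$, the centres of whose fibres form $q_f^{-1}(\partial W_f)=S(f)$, while over $W_f':=W_f\setminus{\rm Int}\ N(\partial W_f)$ it is a smooth $S^{m-n}$-bundle $E_S$; the local normal form of a special generic map moreover shows that along $\partial W_f$ the interior sphere fibres degenerate to the points of $q_f^{-1}(\partial W_f)$. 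To construct $W$, over $W_f'$ I would replace $E_S$ by its fibrewise cone bundle $Q$ (the mapping cylinder of $q_f$ restricted to $q_f^{-1}(W_f')$), which has fibre $\mathrm{Cone}(S^{m-n})\cong D^{m-n+1}$, is a compact $(m+1)$-manifold with corners with $\partial_{\mathrm{vert}}Q=E_S$, and carries a cone-point section that is a copy of $W_f'$; over the collar I would put $q_f^{-1}(N(\partial W_f))\times[0,1]$; and I would glue these two $(m+1)$-dimensional pieces along the copy of the $D^{m-n+1}$-bundle over $\partial W_f'$ sitting in the boundary of each (choosing $Q$ near $\partial W_f'$ to restrict to the given bundle), then smooth corners. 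The cone construction makes literal sense in the PL (piecewise smooth) category, and smoothly precisely when the sphere bundle is linear -- which holds for $m-n=1,2,3$ by Proposition \ref{prop:2} (\ref{prop:2.1}) and for the maps of Proposition \ref{prop:2} (\ref{prop:2.2}), where it is trivial; this is why the statement distinguishes the categories.

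Having $W$, the verification of (\ref{prop:3.1})--(\ref{prop:3.4}) is bookkeeping. The fibrewise projections of $Q$ and of $q_f^{-1}(N(\partial W_f))\times[0,1]$ onto their cone/collar sections agree on the gluing face and assemble to a retraction $r:W\to W_f$, which is a deformation retraction -- a collapse in the PL category, collapsing the cones fibrewise -- giving (\ref{prop:3.1})--(\ref{prop:3.2}). Computing $\partial W$ by cancelling the two glued boundary faces leaves $q_f^{-1}(N(\partial W_f))\cup E_S$ with an extra collar inserted along $q_f^{-1}(\partial W_f')$, hence $\partial W$ is (PL-homeomorphic, resp. diffeomorphic, to) $M$, which is (\ref{prop:3.3}). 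Since $q_f$ restricted to $q_f^{-1}(N(\partial W_f))$ already has, on each disk fibre, the squared-norm form $x\mapsto\|x\|^2$ -- exactly a cone coordinate -- one can choose the cone and collar parametrisations so that $r|_{\partial W}=q_f$ under the identification $\partial W\cong M$, which is (\ref{prop:3.4}).

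For (\ref{prop:3.5}), since $r$ is a deformation retraction and $q_f=r|_M$, for every coefficient group $A$ the maps induced by $i:M\hookrightarrow W$ on $H_j$, on $H^j$ and (with the obvious care about basepoints when $m-n=1$) on $\pi_j$ coincide with those induced by $q_f:M\to W_f$, so it suffices to prove that $q_f$ induces isomorphisms in degrees $0\le j\le m-n$. I would run the Mayer--Vietoris (resp. van Kampen) comparison for the decompositions of $M$ and of $W_f$ by $N(\partial W_f)$ and $W_f'$: over $N(\partial W_f)$ the map $q_f$ is a homotopy equivalence; over $W_f'$ and over $\partial W_f'$ it is an $S^{m-n}$-bundle projection, hence -- the fibre being $(m-n-1)$-connected -- an isomorphism on $H_j$ and $\pi_j$ for $j\le m-n-1$ and a surjection in degree $m-n$. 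The comparison immediately yields isomorphisms for $j\le m-n-1$ and surjectivity for $j=m-n$. For injectivity in degree $m-n$ one uses that the only extra class carried by the sphere bundle is the fibre class, and that a fibre sphere bounds a disk in $M$: pushed through $q_f^{-1}(N(\partial W_f))$, where the sphere fibres collapse to points, it bounds half of a disk fibre, so it dies in $H_{m-n}(M)$ and in $\pi_{m-n}(M)$ and the diagram chase closes; the cohomology assertion then follows from the universal coefficient theorem.

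I expect the main obstacle to be exactly the first point -- turning ``cone off the $S^{m-n}$-bundle'' into an honest manifold $W$ -- which forces the PL/piecewise-smooth setting in general and only becomes smooth when $m-n\le 3$ or for the maps of Proposition \ref{prop:2} (\ref{prop:2.2}). The one genuinely non-formal topological input after that is the injectivity of $q_{f\ast}$ in the top degree $j=m-n$, which really uses the degeneration of the sphere fibres along $\partial W_f$ rather than merely the connectivity of the bundle.
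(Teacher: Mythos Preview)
The paper does not prove Proposition~\ref{prop:3}; it is quoted from the cited references \cite{kitazawa1,kitazawa2,kitazawa3,kitazawa4,saeki,saeki2,saekisuzuoka} and then \emph{used} to prove Theorem~\ref{thm:5}. So there is no proof in the paper to compare against.

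That said, your plan is essentially the standard argument that one finds in those references (especially \cite{saeki2}). The construction of $W$ as the mapping cylinder of $q_f$, realised concretely by fibrewise coning the $S^{m-n}$-bundle over $W_f'$ and adjoining a collar piece over $N(\partial W_f)$, is exactly the right picture; your identification of the PL/smooth dichotomy with the question of whether coning a smooth sphere bundle yields a smooth disk bundle (automatic when the bundle is linear, hence for $m-n\le 3$ and for the maps of Proposition~\ref{prop:2}~(\ref{prop:2.2})) is correct and is precisely why the proposition is phrased as it is. Your verification of (\ref{prop:3.1})--(\ref{prop:3.4}) is routine and sound.

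For (\ref{prop:3.5}) your Mayer--Vietoris/van Kampen comparison is fine in degrees $j\le m-n-1$, and your handling of the top degree $j=m-n$ --- that the only possible kernel class is the fibre sphere, and that this fibre bounds a disk in $M$ once pushed into the collar region where the $D^{m-n+1}$-fibres live --- is the crux and is correct. One small point: your gluing of $Q$ to $q_f^{-1}(N(\partial W_f))\times[0,1]$ along a ``copy of the $D^{m-n+1}$-bundle over $\partial W_f'$'' needs a sentence more of care, since the boundary of $Q$ over $\partial W_f'$ is the whole $D^{m-n+1}$-fibre while the relevant face of the collar piece is $q_f^{-1}(\partial W_f')\times[0,1]\cong S^{m-n}\times[0,1]$-bundle over $\partial W_f'$; these are identified via the cone coordinate, but it is worth saying so explicitly so that $\partial W=M$ comes out without ambiguity. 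With that clarification the argument is complete.
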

We can give a proof of Theorem \ref{thm:5} by applying Proposition \ref{prop:3}.
\begin{proof}[A proof of Theorem \ref{thm:5}.]
	We can take a unique element $b_j \in H^{\ast}(W;A)$ satisfying $a_j=i^{\ast}(b_j)$ for the inclusion $i:M \rightarrow W$ in Proposition \ref{prop:3}.
	We have ${\cup}_{j=1}^l a_j=i^{\ast}({\cup}_{j=1}^l b_j)$ and this is the zero element since the degree is greater than or equal to $n$, which is the dimension of $W_f$. 
	Remember that $W$ is (simple) homotopy equivalent to $W_f$ and that its boundary is non-empty boundary. $W$ has the homotopy type of a polyhedron whose dimension is smaller than $n$. 
	For this, note also that $N$ is non-closed and has no boundary.
	This completes the proof.
	
\end{proof}
\begin{Thm}[\cite{kitazawa13}]
	\label{thm:6}
	There exist countably many $7$-dimensional closed and simply-connected manifolds the cohomology ring of each of which is isomorphic to that of a manifold in Example \ref{ex:1} for any coefficient ring which is a principal ideal domain having an identity element different from the zero element and these manifolds enjoy the following two properties.
	\begin{enumerate}
		\item These manifolds admit fold maps into ${\mathbb{R}}^n$ for any $n=1,2,3,4,5,6,7$.
		\item These manifolds admit no special generic maps into ${\mathbb{R}}^n$ for any $n=1,2,3,4,5$.
	\end{enumerate} 
\end{Thm}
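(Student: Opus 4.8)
The plan is to realise each manifold as the connected sum of a standard model from Example \ref{ex:1} with a fixed exotic $7$-sphere: the model fixes the cohomology ring, while the exotic summand carries the obstruction to special generic maps into low dimensions. Fix an oriented homotopy $7$-sphere $\Sigma$ which is not standard (the group ${\Theta}_7$ of oriented homotopy $7$-spheres has order $28$), and for each integer $l \geq 1$ put
\[
M_l := \bigl( {\#}_{i=1}^{l} (S^3 \times S^4) \bigr) \# \Sigma .
\]
Then $M_l$ is a $7$-dimensional closed simply-connected manifold, and because $\Sigma$ is homeomorphic to $S^7$ its underlying topological manifold is that of ${\#}_{i=1}^{l}(S^3 \times S^4)$. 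Consequently, for every principal ideal domain used as the coefficient ring, the cohomology ring of $M_l$ is isomorphic to that of the manifold ${\#}_{i=1}^{l}(S^3 \times S^4)$ of Example \ref{ex:1}; letting $l$ run over the positive integers gives countably many such manifolds, already distinguished by the ranks of their third cohomology groups.

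For the first property I would use that $M_l$ is stably parallelizable — products of spheres, their connected sums, and homotopy spheres all are — so that the flexibility of fold maps (an $h$-principle) yields fold maps into ${\mathbb{R}}^n$ for every $2 \leq n \leq 7$, while the case $n=1$ is just a Morse function. A more hands-on construction is also available: by Example \ref{ex:1} the model ${\#}_{i=1}^{l}(S^3 \times S^4)$ admits special generic, hence fold, maps into ${\mathbb{R}}^n$ for $4 \leq n \leq 7$, every homotopy sphere admits fold maps into each ${\mathbb{R}}^n$, and one can fuse two fold maps along a common regular fibre to obtain a fold map on the connected sum.

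For the second property in the range $n = 1,2,3$ I would apply Theorem \ref{thm:5} directly. Let $\alpha \in H^3(M_l;\mathbb{Z})$ and $\beta \in H^4(M_l;\mathbb{Z})$ be the generators arising from a single $S^3 \times S^4$ summand, so that $\alpha \cup \beta$ generates $H^7(M_l;\mathbb{Z}) \cong \mathbb{Z}$ and is nonzero. For $n \in \{1,2,3\}$ both degrees satisfy $3, 4 \leq m - n = 7 - n$ and their sum $7$ is at least $n$; hence, were there a special generic map into ${\mathbb{R}}^n$, Theorem \ref{thm:5} would force $\alpha \cup \beta = 0$, a contradiction. So no special generic map into ${\mathbb{R}}^n$ exists for $n=1,2,3$.

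The remaining and most delicate range is $n = 4, 5$, where Theorem \ref{thm:5} is silent: every product of positive-degree classes of degree $\leq 7-n$ already vanishes in the cohomology ring of a connected sum of copies of $S^3 \times S^4$, since the only nonzero products pair a degree-$3$ class with a degree-$4$ class. Here I would isolate the exotic summand. The model $N_l := {\#}_{i=1}^{l}(S^3 \times S^4)$ admits special generic maps into ${\mathbb{R}}^4$ and ${\mathbb{R}}^5$ by Example \ref{ex:1}, whereas by Theorem \ref{thm:1} the exotic sphere $\Sigma$ admits none, as $4 = m-3$ and $5 = m-2$. The key step to establish is a connected-sum compatibility for special generic maps: the class of closed manifolds admitting a special generic map into ${\mathbb{R}}^n$ is closed under connected sum, and the homotopy $7$-spheres admitting one form a subgroup of ${\Theta}_7$ (Saeki, \cite{saeki2}). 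Granting this, a special generic map $M_l = N_l \# \Sigma \to {\mathbb{R}}^n$ ($n = 4,5$) together with the one on $N_l$ would place $\Sigma$ in that subgroup, i.e. would furnish a special generic map on $\Sigma$, contradicting Theorem \ref{thm:1}. I expect the main obstacle to be precisely this transfer: one must make rigorous the ``cancellation'' of the map on $N_l$ against the map on $M_l$, realising it geometrically through the Stein factorisation of Proposition \ref{prop:2} and the compact filling $W$ of Proposition \ref{prop:3}, localising the modification near a regular fibre so that the differentiable structure detected by $\Sigma$ really survives. With this compatibility in hand, the four ingredients above assemble into the full statement.
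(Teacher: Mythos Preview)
The present paper does not prove Theorem~\ref{thm:6}; it is quoted from \cite{kitazawa13}, whose title (``\ldots triple Massey products\ldots'') indicates that the examples there are constructed so that a nontrivial triple Massey product obstructs special generic maps into ${\mathbb{R}}^4$ and ${\mathbb{R}}^5$. Your route via connected sum with an exotic $7$-sphere is therefore a genuinely different idea: you put the obstruction in the smooth structure rather than in the homotopy type.

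Your argument is sound for the cohomology statement and for $n\le 3$ via Theorem~\ref{thm:5}. The gap you yourself flag at $n=4,5$ is, however, a real one, not just missing bookkeeping. The two facts you cite from \cite{saeki2}---that the class of manifolds admitting special generic maps into ${\mathbb{R}}^n$ is closed under connected sum, and that the homotopy $7$-spheres admitting one form a subgroup of $\Theta_7$---do \emph{not} yield the cancellation you use: from ``$N_l$ admits one'' and ``$N_l\#\Sigma$ admits one'' you cannot deduce ``$\Sigma$ admits one''. What you would need is the converse implication ``$M\#\Sigma$ admits a special generic map into ${\mathbb{R}}^n$ only if $\Sigma$ does, given that $M$ does'', and that requires controlling every possible Stein factorization $W_f$ of a putative map on $N_l\#\Sigma$, not just the one inherited from $N_l$. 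Propositions~\ref{prop:2} and~\ref{prop:3} constrain $W_f$ only through $H_j$ and $\pi_j$ for $j\le m-n$ (here $j\le 3$ for $n=4$ and $j\le 2$ for $n=5$), which does not pin down $W_f$ or the bundle data. I do not see how to close this with the tools available in the paper; the Massey-product mechanism of \cite{kitazawa13} sidesteps exactly this difficulty by producing an obstruction visible already at the level of $W_f$.
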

\begin{Thm}[\cite{kitazawa15}]
	\label{thm:7}
 In Proposition \ref{prop:2} {\rm (}\ref{prop:2.2}{\rm )}, assume that $\bar{N}=W_f$ is connected and orientable. Then we can also have a suitable closed and connected manifold $M$ and a special generic map $f:M \rightarrow N$ enjoying the following properties.
	\begin{enumerate}
		\item For any principal ideal domain $A$ having the unique identity element different from the zero element and making $H_j(W_f;A)$ free for any integer $j$, the following properties hold.
 \begin{enumerate}
 	\item $H_j(M;A)$ is isomorphic to a direct sum of $H_j(W_f;A)$ and $H_{j-(m-n)}(W_f,\partial W_f;A)$.
  \item $H^j(M;A)$ is isomorphic to a direct sum of $H^j(W_f;A)$ and $H^{j-(m-n)}(W_f,\partial W_f;A)$ and we can identify them as commutative groups and modules over $A$.
  \item For the structure of the cohomology ring $H^{\ast}(M;A)$, the following three hold.
  \begin{enumerate}
  	\item The cup products of elements in the summand $H^{\ast}(W_f;A)$ induce a subalgebra isomorphic to the cohomology ring $H^{\ast}(W_f;A)$.
  	\item The cup products of elements in the summand $H^{\ast}(W_f;A)$ and elements in ${\oplus}_j H^{j-(m-n)}(W_f,\partial W_f;A)$ induce a bilinear form isomorphic to that for $W_f${\rm :} ${\oplus}_j H^{j-(m-n)}(W_f,\partial W_f;A)$ denotes the direct sum for all integers $j \geq m-n$.
  	\item The cup products of elements in the summand ${\oplus}_j H^{j-(m-n)}(W_f,\partial W_f;A)$ are always the zero elements.
  \end{enumerate}
\end{enumerate}
\item We have a special generic map $f_j:M \rightarrow N \times {\mathbb{R}}^{j}$ for each integer $j$ satisfying $1 \leq j \leq m-n$ and $f$ is represented as the composition of $f_j$ with the canonical projection to $N$.
 \end{enumerate}
\end{Thm}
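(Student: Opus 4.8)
The plan is to realise $M$ concretely as the boundary of a thickening of $W_f$ and then to read off every assertion from Poincar\'e--Lefschetz duality for that thickening and for $W_f$. Write $q:=m-n\geq 1$. First I would apply Proposition \ref{prop:2} (\ref{prop:2.2}) to the immersion $\bar f_N\colon\bar N=W_f\to N$ together with its trivialising conclusion: after identifying $W_f-\mathrm{Int}\,N(\partial W_f)$ with $W_f$ and using triviality of both bundles in Proposition \ref{prop:2} (\ref{prop:2.1}), the resulting closed connected manifold is $M=(W_f\times S^{q})\cup_{\partial W_f\times S^{q}}(\partial W_f\times D^{q+1})$, which is precisely the boundary of the compact orientable $(m+1)$-manifold $W:=W_f\times D^{q+1}$ (a smooth model of the manifold $W$ of Proposition \ref{prop:3}; recall $\partial W_f\neq\emptyset$, whence $H_n(W_f)=0$). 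Here $f=\bar f_N\circ q_f$; moreover $q_f$ admits a section $s\colon W_f\to M$, $x\mapsto(x,x_0)$ for a fixed $x_0\in S^{q}$, and the composite $i\circ s\colon W_f\xrightarrow{\,s\,}M\xrightarrow{\,i\,}W$ is a homotopy equivalence, which I use to identify $H^\ast(W;A)$ with $H^\ast(W_f;A)$, so that $s^\ast i^\ast=\mathrm{id}$.

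Next I would compute the groups. As $i\circ s$ is a homotopy equivalence, $i_\ast\colon H_j(M;A)\to H_j(W;A)\cong H_j(W_f;A)$ is a split epimorphism (with section $s_\ast$) and $i^\ast$ is a split monomorphism (with retraction $s^\ast$), so the long exact sequences of $(W,M)$ break into split short exact sequences $0\to H_{j+1}(W,M;A)\to H_j(M;A)\to H_j(W;A)\to 0$ and $0\to H^j(W;A)\to H^j(M;A)\to H^{j+1}(W,M;A)\to 0$. Poincar\'e--Lefschetz duality for $W$ gives $H_{j+1}(W,M;A)\cong H^{m-j}(W;A)\cong H^{m-j}(W_f;A)$ and $H^{j+1}(W,M;A)\cong H_{m-j}(W_f;A)$, and Poincar\'e--Lefschetz duality for $W_f$ (orientable, hence $A$-orientable for every $A$) turns these into $H^{m-j}(W_f;A)\cong H_{j-q}(W_f,\partial W_f;A)$ and $H_{m-j}(W_f;A)\cong H^{j-q}(W_f,\partial W_f;A)$. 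Since $A$ is a principal ideal domain making all $H_\ast(W_f;A)$ free there are no extension problems, which yields statements (1)(a) and (1)(b). I set $D^\ast:=\ker\big(s^\ast\colon H^\ast(M;A)\to H^\ast(W_f;A)\big)$, the distinguished graded complement of $i^\ast H^\ast(W;A)$ in $H^\ast(M;A)$, so identified with $H^{\ast-q}(W_f,\partial W_f;A)$.

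For the ring structure, $i^\ast=q_f^\ast$ is a split monomorphism of graded rings, giving the subalgebra of (1)(c)(i). The heart of (1)(c)(ii) and (1)(c)(iii) is that $s_\ast H_\ast(W_f;A)\subseteq H_\ast(M;A)$ is isotropic for the intersection form of the closed oriented manifold $M$: $s_\ast\alpha$ is carried by a cycle $z_\alpha\times\{x_0\}$ in $W_f\times S^{q}\subseteq M$, while a second such cycle $z_\beta\times\{x_0\}$ is homologous, inside $W_f\times S^{q}$, to the disjoint cycle $z_\beta\times\{x_1\}$ with $x_1\neq x_0$, so their intersection number vanishes. Comparing ranks with the previous computation and using this isotropy together with perfectness of the pairing $H^j(W_f;A)\times H_j(W_f;A)\to A$, one obtains $D^\ast=\{u\in H^\ast(M;A)\mid u\cap[M]\in s_\ast H_\ast(W_f;A)\}$. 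Then for $a=i^\ast\bar a\in i^\ast H^\ast(W;A)$ and $u,v\in D^\ast$: since $s^\ast$ is a ring homomorphism, $s^\ast(u\cup v)=s^\ast(a\cup v)=0$, so $u\cup v,a\cup v\in D^\ast$; and, writing $v\cap[M]=s_\ast\beta$, the projection formula gives $(a\cup v)\cap[M]=a\cap s_\ast\beta=s_\ast(\bar a\cap\beta)$ and $(u\cup v)\cap[M]=u\cap s_\ast\beta=s_\ast\big((s^\ast u)\cap\beta\big)=0$ because $s^\ast u=0$. Under $D^\ast\cong H_\ast(W_f;A)\cong H^{\ast-q}(W_f,\partial W_f;A)$ the first identity turns the action of $i^\ast H^\ast(W;A)$ on $D^\ast$ into the cup-product action of $H^\ast(W_f;A)$ on $H^\ast(W_f,\partial W_f;A)$, i.e.\ the bilinear form of $W_f$ --- this is (1)(c)(ii) --- and the second gives $u\cup v=0$, which is (1)(c)(iii).

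Finally, for part (2), for each $j$ with $1\le j\le q$ I would apply Proposition \ref{prop:2} (\ref{prop:2.2}), in its equidimensional form when $j=q$, to the immersion $\bar f_N\times\iota_j\colon W_f\times D^{j}\to N\times\mathbb{R}^{j}$ with $\iota_j\colon D^{j}\hookrightarrow\mathbb{R}^{j}$ the inclusion (after smoothing the corners of $W_f\times D^{j}$) and with the trivialising conclusion, obtaining a special generic map $f_j$ with quotient $W_{f_j}=W_f\times D^{j}$. Since $m-\dim(N\times\mathbb{R}^{j})+1=q-j+1$, its domain is $\partial\big((W_f\times D^{j})\times D^{q-j+1}\big)\cong\partial(W_f\times D^{q+1})=M$, and the projection $N\times\mathbb{R}^{j}\to N$ carries $f_j$ to $f$ as it exactly forgets the $D^{j}$-factor of the quotient. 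I expect the truly delicate step to be the ring structure in the first statement --- above all the identification $D^\ast=\{u\mid u\cap[M]\in s_\ast H_\ast(W_f;A)\}$ and the cap-product bookkeeping near $\partial W_f$, where $s(W_f)$ is a submanifold with boundary of $M$ rather than a closed submanifold; the group-level assertions and part (2) are routine consequences of Proposition \ref{prop:2} (\ref{prop:2.2}) and Poincar\'e--Lefschetz duality. This is essentially the analysis of \cite{kitazawa15}.
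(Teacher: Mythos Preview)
Your proposal is correct. Note, however, that the paper does not actually prove Theorem~\ref{thm:7}: it cites \cite{kitazawa15} for the cohomology-ring statements in part~(1) and only supplies, in the paragraphs following the theorem, an explicit description of the maps $f_j$ for part~(2). Your realisation of $M$ as $\partial(W_f\times D^{q+1})$, the section-splitting of the long exact sequence of $(W,M)$, and the Poincar\'e--Lefschetz computation are the natural route and almost certainly coincide with the argument in the cited reference; the isotropy-of-$s_\ast$ argument for (1)(c)(ii)--(iii) is clean and the rank comparison you flag as ``delicate'' does go through because $s_\ast H_\ast(W_f;A)$ is a direct summand (split by $q_{f\ast}$) and everything is free.

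For part~(2) there is a mild stylistic difference worth noting. You re-invoke Proposition~\ref{prop:2}~(\ref{prop:2.2}) on the immersion $\bar f_N\times\iota_j\colon W_f\times D^j\to N\times\mathbb{R}^j$ and then identify the domain with $M$ via $\partial\big((W_f\times D^j)\times D^{q-j+1}\big)\cong\partial(W_f\times D^{q+1})$. The paper instead starts from the product-organized map $f_0$ and explicitly replaces the fibrewise projection $S^{m-n}\to\{\ast\}$ over $W_f\setminus\mathrm{Int}\,N(\partial W_f)$ by the canonical projection $S^{m-n}\to\mathbb{R}^j$, and the hemisphere projection over $N(\partial W_f)$ by a hemisphere projection into $\mathbb{R}^{j+1}$. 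Both constructions produce the same $f_j$ with $W_{f_j}=W_f\times D^j$; the paper's local-model description makes the factorisation $\mathrm{pr}_N\circ f_j=f$ visible pointwise, while yours is more structural and avoids the hemisphere bookkeeping.
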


We explain about the family $\{f\} \bigcup \{f_j\}_{j=1}^{m-n}$ of maps in Theorem \ref{thm:7}. In Proposition \ref{prop:2} (\ref{prop:2.2}), we have a map $f$  
enjoying the following properties. A ({\it $k$-dimensional}) {\it hemisphere} means the set of all points of the $k$-dimensional unit sphere whose $k^{\prime}$-th component is for some integer $1 \leq k^{\prime} \leq k+1$. We can naturally have a canonical projection.
\begin{itemize}
	\item For some collar neighborhood $N(\partial W_f)$ of the boundary $\partial W_f \subset W_f$, the composition of $f {\mid}_{f^{-1}(N(\partial W_f))}:f^{-1}(N(\partial W_f)) \rightarrow N(\partial W_f)$ with the canonical projection to $\partial W_f$ gives the structure of a trivial linear bundle whose fiber is the unit disk $D^{m-n+1}$. 
	\item The restriction of $f$ to $f^{-1}(\partial W_f-{\rm Int}\ N(\partial W_f))$ gives the structure of a trivial smooth bundle over the complementary set $\partial W_f-{\rm Int}\ N(\partial W_f)$ of the interior ${\rm Int}\ N(\partial W_f) \subset W_f$ of $N(\partial W_f)$ whose fiber is the unit sphere $S^{m-n}$.
\end{itemize}

Furthermore, we glue these two bundles between the boundaries by isomorphism of the trivial bundles defined on the boundaries in a canonical way. 

\begin{itemize}
	\item We can choose the isomorphism of bundles as the product map of the diffeomorphisms to obtain a good special generic map $f_0$ on some suitable manifold $M_0$. 
	\item Let $W_{f_0}:=W_f$. 
	On $N(\partial W_f)$, denoted by $N(\partial W_{f_0})$, $f_0$ is regarded as the product map of a copy of a canonical projection of a hemisphere of the unit sphere $S^{m-n+1}$ into $\mathbb{R}$ and the identity map on the boundary $\partial W_{f_0}$ of $W_{f_0}$.
	\end{itemize}

	By the following procedure,	we can construct a new special map $f_j:M_0 \rightarrow N \times {\mathbb{R}}^{j}$ where $1 \leq j \leq m-n$ is an integer.

\begin{itemize}

	\item For the restriction of $f_0$ to ${f_0}^{-1}(\partial W_{f_0}-{\rm Int}\ N(\partial W_{f_0}))$, we replace $\partial W_{f_0}-{\rm Int}\ N(\partial W_{f_0})$ by $(\partial W_{f_0}-{\rm Int}\ N(\partial W_{f_0})) \times {\mathbb{R}}^j$ and replace the projection of the trivial bundle to the product map of a copy of a canonical projection of the unit sphere $S^{m-n}$ to ${\mathbb{R}}^j$ and the identity map on $\partial W_{f_0}-{\rm Int}\ N(\partial W_{f_0})$.
	\item For the restriction of $f_0$ to ${f_0}^{-1}(N(\partial W_{f_0}))$, we replace the original product map by the product map of a copy of a canonical projection of a hemisphere of the unit sphere $S^{m-n+1}$ into ${\mathbb{R}}^{j+1}$ and the identity map on $\partial W_{f_0}$.
\end{itemize}

This can be also constructed as a map such that the composition of $f_j$ with the canonical projection to $N$ is the original map $f:=f_0$.

This gives a desired family of special generic maps.
\begin{Def}
	\label{def:1}
we call $f=f_0$ enjoying the property here a {\it product-organized} special generic map.
\end{Def}

\section{Main Theorems.}
Let $A$ be a principal ideal domain having a unique identity element different from the zero element.
For a compact and smooth, PL, piecewise smooth, or topological manifold $X$, a homology class $a \in H_j(X,\partial X;A)$ is {\it represented} by a closed and connected submanifold $Y$ of dimension $j$ if for a suitable generator $\nu$ of $H_j(Y,\partial Y;A)$ and a suitable embedding $i:Y \rightarrow X$ satisfying $i({\rm Int}\ Y) \subset {\rm Int}\ X$ and $i(\partial Y) \subset \partial X$ chosen in the category where we discuss, $a=i_{\ast}(\nu)$. Such a generator is, a so-called {\it fundamental class}, canonically obtained for $Y$ if $Y$ is oriented or $A$ is a commutative group of order $2$.

In Poincar\'e duality theorem for a compact and connected (oriented) manifold $X$, for each homology class $c_{\rm h} \in H_j(X,\partial X;A)$ and $c_{\rm h} \in H_j(X;A)$, we have the {\it Poincar\'e
	duals} ${\rm PD}(c_{\rm h})$ to $c_{\rm h}$ as elements of $H^{\dim X-j}(X;A)$ and $H^{\dim X-j}(X,\partial X;A)$ uniquely. For a compact and connected (oriented) manifold $X$, for each cohomology class $c_{\rm c} \in H^j(X,\partial X;A)$ and $c_{\rm c} \in H^j(X;A)$, we have the {\it Poincar\'e
	duals} ${\rm PD}(c_{\rm c})$ to $c_{\rm c}$ as elements of $H_{\dim X-j}(X;A)$ and $H_{\dim X-j}(X,\partial X;A)$ uniquely.

We explain about another important notion. Let $A$ be a commutative ring having the unique identity element $1$ and the zero element $0$. For a free submodule of a homology group $H_j(X,X^{\prime};A)$ where $X$ and $X^{\prime} \subset X$ are topological spaces, its basis consisting of elements which are not divisible by any elements which are not units and an element $e \in H_j(X,X^{\prime};A)$ of the basis, we can define the element $e^{\ast} \in H^j(X,X^{\prime};A)$ enjoying the following two.

\begin{itemize}
	\item $e^{\ast}(e)=1$. 
	\item $e^{\ast}(e^{\prime})=0$ for any element $e^{\prime}$ of the basis different from $e$.
\end{itemize}
	 This is the {\it cohomology dual} to $e$. We omit expositions on the bases where we can guess easily.

\begin{proof}[Proof of Main Theorem \ref{mthm:1}]
	We prove (\ref{mthm:1.1}).
	
	Let $r \in \mathbb{Z}$.
	We define and investigate a compact, connected and smooth manifold $D_{r}$ of dimension $n>0$ smoothly embedded in ${\mathbb{R}}^n$ to have a special generic map $f_r:M_r \rightarrow {\mathbb{R}}^n$ in Proposition \ref{prop:2} (\ref{prop:2.2}) such that $W_{f_r}$, denoting "$W_f$ in Proposition \ref{prop:2}", is identified with $D_{r}$.

$k_1,k_2>0$ are given as integers satisfying the relations $n=k_1+k_2$ and $2k_1<n$.
We have an $n$-dimensional smooth compact submanifold $D_{k_1,k_2}$ which is diffeomorphic to $S^{k_1} \times D^{k_2}$ and smoothly embedded in ${\mathbb{R}}^n$. 
$H_j(D_{k_1,k_2};\mathbb{Z})$ is the trivial group for $j \neq 0,k_1$ and isomorphic to $\mathbb{Z}$ for $j=0,k_1$. 
Let ${\nu}_{k_1,k_2}$ be a generator of $H_{k_1}(D_{k_1,k_2};\mathbb{Z}) \cong \mathbb{Z}$.

For $r \in \mathbb{Z}$, let $S_{r}$ be a $k$-dimensional closed manifold where $k$ is greater than or equal to $k_1$ and smaller than or equal to $\frac{n}{2}$. We also assume the existence of an element $a \in H_{k-k_1}(S_{r};\mathbb{Z})$ such that we can define the cohomology dual to it, considered as a generator of a free subgroup of rank $1$, and that $a$ is represented by a ($k-k_1$)-dimensional closed, connected and smooth submanifold $F_{r} \subset S_{r}$ with no boundary.



To simplify the problem, let $S_{r}$ be diffeomorphic to $S^{k_1} \times S^{k-k_1}$ in the case $k-k_1>0$ and $S^{k_1}$ in the case $k=k_1$. $F_r$ is regarded as $\{\ast\} \times S^{k-k_1} \subset S^{k_1} \times S^{k-k_1}$ for some point $\ast \in S^{k_1}$ in the case $k-k_1>0$ and some point of $S^{k_1}=S^k$ in the case $k=k_1$.
 
An element represented by $F_{r}$ is a generator of a free subgroup of rank $1$ of $H_{k-k_1}(S_{r};\mathbb{Z})$ and we can have a basis of $H_{k-k_1}(S_{r};\mathbb{Z})$ containing this element. We have the cohomology dual to a homology class represented by $F_{r}$ in $S_{r}$.
For $r \in \mathbb{Z}$, we can take a smooth embedding $i_{k_1,k_2,r}:S_{r} \rightarrow {\rm Int}\ D_{k_1,k_2} \subset D_{k_1,k_2}$ such that a normal bundle is trivial and that $r{\nu}_{k_1,k_2}$ is the value of the homomorphism ${i_{k_1,k_2,r}}_{\ast}:H_{k_1}(S_{r};\mathbb{Z}) \rightarrow H_{k_1}(D_{k_1,k_2};\mathbb{Z})$ at the Poincar\'e dual to the element regarded as the cohomology dual to the element represented by $F_{r}$ in $S_{r}$ discussed just before. 
 We remove the interior of some small closed tubular neighborhood $N(S_{r})$ of the submanifold. 
 $N(S_{r})$ is regarded as the total space of a trivial linear bundle over $S_{r}$ whose fiber is the unit disk $D^{n-k}$.
 Let $D_{r}$ denote the resulting $n$-dimensional submanifold in
${\mathbb{R}}^n$. We also have a Mayer-Vietoris sequence \\
$$\rightarrow H_j(\partial D_{r} \bigcap N(S_{r});\mathbb{Z}) \rightarrow H_j(D_{r};\mathbb{Z}) \oplus H_j(N(S_{r});\mathbb{Z}) \rightarrow H_j(D_{k_1,k_2};\mathbb{Z}) \rightarrow$$
and $\partial D_{r} \bigcap N(S_{r})$ is regarded as the total space of a trivial linear bundle over $S_{r}$ whose fiber is the unit sphere $S^{n-k-1}$ and which is the subbundle of the bundle $N(S_{r})$.
We investigate the case where $0 \leq j<k_1$. The homomorphism from $H_{j}(D_{k_1,k_2};\mathbb{Z}) \cong \{0\}$ into $H_{j-1}(\partial D_{r} \bigcap N(S_{r});\mathbb{Z})$ for $1 \leq j <k_1$ is a monomorphism.

 $S_{r}$ is diffeomorphic to $S^{k_1} \times S^{k-k_1}$ and the fiber of the trivial bundle $N(S_{r})$ over $S_{r}$ is the unit disk $D^{n-k}$ and the fiber of the trivial bundle $\partial D_r \bigcap N(S_{r})$ over $S_{r}$ is the unit sphere $S^{n-k-1}$.
 We can see that for $0 \leq j \leq k_1-1<\frac{n}{2}-1 \leq n-k-1$, each element of $H_{j}(\partial D_{r} \bigcap N(S_{r});\mathbb{Z})$ is represented by a submanifold in the image of a section of the trivial bundle $\partial D_{r} \bigcap N(S_{r})$ over $S_{r}$. We can also see that 
the homomorphism from $H_{j}(\partial D_{r} \bigcap N(S_{r});\mathbb{Z})$ into $H_j(D_{r};\mathbb{Z}) \oplus H_j(N(S_{r});\mathbb{Z})$ is represented as the direct sum of the homomorphism onto $\{0\} \subset H_j(D_{r};\mathbb{Z})$ and an isomorphism onto $H_j(N(S_{r});\mathbb{Z})$ for $0 \leq j \leq k_1-1<\frac{n}{2}-1 \leq n-k-1$. We have $H_{j}(D_{r};\mathbb{Z}) \cong \mathbb{Z}$ for $j=0$ and $H_{j}(D_{r};\mathbb{Z}) \cong \{0\}$ for $0<j<k_1$.





By the condition on the integers,
$1 \leq k_1<\frac{n}{2}<n-1$. This means that $H_{n-1}(D_{k_1,k_2};\mathbb{Z})$ is the trivial group. $k \leq \frac{n}{2}<n-1$ and $S_{r}$ is $k$-dimensional. $N(S_{r})$ collapses to $S_{r}$. We have $H_{n-1}(N(S_{r});\mathbb{Z}) \cong \{0\}$. We have $H_{n-1}(\partial D_{r} \bigcap N(S_{r});\mathbb{Z}) \cong \mathbb{Z}$.
By the sequence, we have $H_{n-1}(D_{r};\mathbb{Z}) \cong \mathbb{Z}$ and its generator is represented by the connected component of the boundary of some (small) collar neighborhood of $\partial D_{r} \bigcap N(S_{r})$ in the interior of $D_{r}$.

$N(S_{r})$ is the total space of a trivial linear bundle over $S_r$, diffeomorphic to $S^{k_1} \times S^{k-k_1}$, where its fiber is the unit disk $D^{n-k}$. The trivial bundle $D_{r} \bigcap N(S_{r})$ over $S_{r}$ over $S_{r}$ is the subbundle of the bundle $N(S_{r})$ whose fiber is the unit sphere $S^{n-k-1}=\partial D^{n-k}$.
We can see that for $j \neq 0,k_1,k-k_1,n-k-1,n-k+k_1-1,k,n-k_1-1,n-1$, $H_{j}(D_{r};\mathbb{Z})$ is the trivial group, since the first and the third groups in the sequence are the trivial groups.

This completes the proof of (\ref{mthm:1.1.1}) and (\ref{mthm:1.1.2}).

The homomorphism from $H_{k_1}(D_{k_1,k_2};\mathbb{Z}) \cong \mathbb{Z}$ into $H_{k_1-1}(\partial D_{r} \bigcap N(S_{r});\mathbb{Z})$ is a homomorphism onto $\{0\} \subset H_{k_1-1}(\partial D_{r} \bigcap N(S_{r});\mathbb{Z})$ since the homomorphism from $H_{k_1-1}(\partial D_{r} \bigcap N(S_{r});\mathbb{Z})$ into $H_{k_1-1}(N(S_{r});\mathbb{Z})$ in the sequence is also an isomorphism onto $H_{k_1-1}(N(S_{r});\mathbb{Z})$ by an argument before. The homomorphism from $H_{j}(D_{k_1,k_2};\mathbb{Z})$ into $H_{j-1}(\partial D_{r} \bigcap N(S_{r});\mathbb{Z})$ is a monomorphism onto $\{0\} \subset H_{j-1}(\partial D_{r} \bigcap N(S_{r});\mathbb{Z})$ for $j>k_1$. 
$H_{k_1}(N(S_{r});\mathbb{Z}) \cong \mathbb{Z}$ for $k_1 \neq k-k_1$ and $H_{k_1}(N(S_{r});\mathbb{Z}) \cong \mathbb{Z} \oplus \mathbb{Z}$ for $k_1 = k-k_1$.
We can also see that the homomorphism from $H_{j}(\partial D_{r} \bigcap N(S_{r});\mathbb{Z})$ into $H_{j}(D_{r};\mathbb{Z}) \oplus H_{j}(N(S_{r});\mathbb{Z})$ is a monomorphism and represented as the direct sum of the homomorphism into $H_{j}(D_{r};\mathbb{Z})$ and the homomorphism into $H_{j}(N(S_{r});\mathbb{Z})$ which is also an epimorphism for $j \geq 0$.

By reviewing our arguments on the sequence, the sum of the ranks of $H_{j}(D_{r};\mathbb{Z})$ and $H_{j}(N(S_{r});\mathbb{Z})$ and that of the ranks of $H_{j}(\partial D_{r} \bigcap N(S_{r});\mathbb{Z})$ and $H_j(D_{k_1,k_2};\mathbb{Z})$ agree. $H_{j}(N(S_{r});\mathbb{Z})$, $H_{j}(\partial D_{r} \bigcap N(S_{r});\mathbb{Z})$ and $H_j(D_{k_1,k_2};\mathbb{Z})$ are free. 
The cokernel of the homomorphism from $H_{j}(\partial D_{r} \bigcap N(S_{r});\mathbb{Z})$ into the direct sum $H_{j}(D_{r};\mathbb{Z}) \oplus H_{j}(N(S_{r});\mathbb{Z})$ is isomorphic to $H_j(D_{k_1,k_2};\mathbb{Z})$. Thus $H_{j}(D_{r};\mathbb{Z})$ is shown to be free. In the case $j=k_1 \leq n-k-1$, the rank of $H_j(D_{k_1,k_2};\mathbb{Z})=H_{k_1}(D_{k_1,k_2};\mathbb{Z})$ is $1$. Thus $H_{k_1}(D_{r};\mathbb{Z})$ is free and not the trivial group. 
Remember that $D_{r} \bigcap N(S_{r})$ is diffeomorphic to $S^{k_1} \times S^{k-k_1} \times S^{n-k-1}=S^{k_1} \times S^{k-k_1} \times \partial D^{n-k}$ and $N(S_r)$ is diffeomorphic to $S^{k_1} \times S^{k-k_1} \times D^{n-k} \supset S^{k_1} \times S^{k-k_1} \times \partial D^{n-k}$.
We have the equations $n-k_1-1=(n-k-1)+(k-k_1)$ and $n-k+k_1-1=(n-k-1)+k_1$.
In the case $j=n-k-1,n-k_1-1,n-k+k_1-1$, the rank of $H_j(N(S_{r});\mathbb{Z})$ is smaller than that of $H_{j}(\partial D_{r} \bigcap N(S_{r});\mathbb{Z})$ and we also have that $H_{j}(D_{r};\mathbb{Z})$ is free and not the trivial group. 
From the relation $1 \leq k_1<\frac{n}{2}$, we have $k_1 \leq k \leq \frac{n}{2}<n-1$. In the case $j=k-k_1,k$, satisfying $j \notin \{k_1, n-k-1,n-k_1-1,n-k+k_1-1\}$, the homomorphism from $H_{j}(\partial D_{r} \bigcap N(S_{r});\mathbb{Z})$ into the direct sum $H_{j}(D_{r};\mathbb{Z}) \oplus H_{j}(N(S_{r});\mathbb{Z})$ and the homomorphism into $H_{j}(N(S_{r});\mathbb{Z})$ here are isomorphisms and $H_{j}(D_{r};\mathbb{Z})$ is the trivial group. In this case, $j \neq n-1$ also holds: $H_{n-1}(D_{r};\mathbb{Z})$ is shown to be isomorphic to $\mathbb{Z}$.

This completes the proof of (\ref{mthm:1.1.3}).

We prove (\ref{mthm:1.1.4}). In the case $k_1>1$, $D_{r}$ is simply-connected, by the fact that the fundamental groups ($1$st homotopy groups) ${\pi}_1(\partial D_{r} \bigcap N(S_{r}))$ and ${\pi}_1(N(S_{r}))$ are commutative (since the spaces are diffeomorphic or simple homotopy equivalent to products of finitely many spheres), by the fact that $D_{k_1,k_2}$ is simply-connected and obtained by gluing $N(S_{r})$ and $D_{r}$ on the boundaries by a diffeomorphism and by virtue of Seifert-van Kampen theorem. This completes the proof of (\ref{mthm:1.1.4}). 

This completes the proof of (\ref{mthm:1.1}).

We show (\ref{mthm:1.2}).

$H_{k_1}(D_{r};\mathbb{Z})$ is free and not the trivial group.



We remark on the cohomology rings of the $m$-dimensional manifolds of the domains of the special generic maps in $\{f_r:M_r \rightarrow {\mathbb{R}}^n\}$. 

Remember that we have special generic maps as in Proposition \ref{prop:2} (\ref{prop:2.2}). This
 makes trivial smooth bundles and enables us to take sections.  

Remember also that according to Proposition \ref{prop:3} (\ref{prop:3.5}), $f_r$ induces isomorphisms between the two homology groups $H_{j}(M_{r};\mathbb{Z})$ and $H_{j}(D_{r}:\mathbb{Z})$ and the two cohomology groups $H^{j}(M_{r};\mathbb{Z})$ and $H^{j}(D_{r}:\mathbb{Z})$ for $0 \leq j \leq m-n$. 

We can take another small closed tubular neighborhood $N_0(S_{r})$ of $i_{k_1,k_2,r}(S_{r})$
satisfying $i_{k_1,k_2,r}(S_{r}) \subset N(S_{r}) \subset {\rm Int}\ N_0(S_{r}) \subset N_0(S_{r})$. The original closed tubular neighborhood is in the interior of this new closed tubular neighborhood and the boundary of the new closed tubular neighborhood, which is connected since the relations $k \leq \frac{n}{2}$ and $n \geq 3$ hold, is regarded as the boundary of some small collar neighborhood of $\partial D_{r} \bigcap N(S_{r})$. Moreover, a generator of $H_{n-1}(D_{r};\mathbb{Z}) \cong \mathbb{Z}$ is, as presented in the proof of (\ref{mthm:1.1.1}) and (\ref{mthm:1.1.2}), represented by this boundary.

 In a suitable way, we may regard ${f_r}^{-1}(\partial N_0(S_{r}))$ as the total space of a trivial smooth bundle over $S^{k_1} \times \{{\ast}^{\prime}\} \subset S^{k_1} \times S^{k-k_1}$ whose fiber is diffeomorphic to $S^{k-k_1} \times \partial D^{n-k} \times S^{m-n}$ in the case $k>k_1$ and $\partial D^{n-k} \times S^{m-n}$ in the case $k=k_1$ for some point ${\ast}^{\prime} \in S^{k-k_1}$. 

In the case $k_1=n-k_1-1$, we can define the basis of $H_{k_1}(M_{r};\mathbb{Z})=H_{n-k_1-1}(M_{r};\mathbb{Z})$ for defining cohomology duals consists of exactly two elements in the following and some additional elements if we need.
\begin{itemize}
	\item An element which is represented by the image of the section of the trivial smooth bundle ${f_r}^{-1}(\partial N_0(S_{r}))$ over $S^{k_1} \times \{{\ast}^{\prime}\}$.
	\item An element which is represented by a submanifold $S^{k-k_1} \times \partial D^{n-k} \times \{{\ast}^{\prime \prime}\} \subset S^{k-k_1} \times \partial D^{n-k} \times S^{m-n}$ in the case $k>k_1$ or $\partial D^{n-k} \times \{{\ast}^{\prime \prime}\} \subset \partial D^{n-k} \times S^{m-n}$ in the case $k=k_1$ for some point ${\ast}^{\prime \prime}$.
\end{itemize}	

We can argue similarly in the case $k_1 \neq n-k_1-1$. The ranks of the groups we consider are $1$.

We investigate the cup product of the cohomology dual to the homology class of degree $k_1$ and the cohomology dual to the homology class of degree $n-k_1-1$. We can easily see that this cup product is $r$ times the cohomology dual to a suitable homology class of degree $n-1$ to which we can define the cohomology dual.


More precisely, this homology class of degree $n-1$ is represented by the image of a section of
the trivial smooth bundle ${f_r}^{-1}(\partial N_0(S_{r}))$ over $\partial N_0(S_{r})$ given by $f_r$. The fiber of the bundle is the unit sphere $S^{m-n}$.
Remember also that a generator of $H_{n-1}(D_{r};\mathbb{Z}) \cong \mathbb{Z}$ is regarded as an element represented by $\partial N_0(S_{r})$.
This means that the homology class of degree $n-1$ represented by the image of the section of the trivial smooth bundle before is an element which is not divisible by any integer greater than $1$.

The relation $k_1,n-k_1-1 \leq m-n+1$ is assumed. We have $k_1+(n-k_1-1)=n-1$. This with Theorem \ref{thm:5} implies that $M_r$ admits no special generic maps into ${\mathbb{R}}^{n^{\prime}}$ for $1 \leq n^{\prime}<n$ for $r \neq 0$. This completes the proof of (\ref{mthm:1.2}).

Furthermore, by taking product-organized special generic maps in Definition \ref{def:1}, we also have (\ref{mthm:1.3}).

We show (\ref{mthm:1.4}). $N(S_{r})$ is the trivial linear bundle over $S_{r}$, diffeomorphic to $S^{k_1} \times S^{k-k_1}$ ($k>k_1$) and $S^{k_1}$ ($k=k_1$), whose fiber is the unit disk $D^{n-k}$. $D_{r} \bigcap N(S_{r})$ is the subbundle whose fiber is $\partial D^{n-k}=S^{n-k-1}$. In the case $k_1=n-k-1$, the rank of $H_{k_1}(D_{r} \bigcap N(S_{r});\mathbb{Z})$ is greater than that of $H_{k_1}(N(S_{r});\mathbb{Z})$ by $1$, the rank of $H_{k_1}(D_{k_1,k_2};\mathbb{Z})$ is $1$ and
by the argument on the ranks before, the rank of $H_{k_1}(D_{r};\mathbb{Z})=H_{n-k-1}(D_{r};\mathbb{Z})$ is shown to be $2$. In the case $k_1 \neq n-k-1$, $k_1<n-k-1$ holds,
the ranks of $H_{k_1}(D_{r} \bigcap N(S_{r});\mathbb{Z})$ and $H_{k_1}(N(S_{r});\mathbb{Z})$ agree and the rank of $H_{k_1}(D_{r};\mathbb{Z})$ is shown to be $1$. In addition to this argument on ranks of homology groups, remember the argument on cup products and an element of degree $n-1$ of $H^{n-1}(M_r;\mathbb{Z})$ which is not divisible by any integer greater than $1$ before.
 Proposition \ref{prop:3} (\ref{prop:3.5}) completes the proof of (\ref{mthm:1.4}).

This completes the proof.
\end{proof}
\begin{Ex}
\label{ex:2}

	Let $m \geq 6$ be an arbitrary integer.
$S^2 \times S^{m-4}$ admits a special generic map into ${\mathbb{R}}^3$ in Example \ref{ex:1} and restrict the manifold of the target to a copy of the $3$-dimensional unit disk smoothly embedded in the original manifold of the target and containing the original image in the interior. 
We consider the product map of this and the identity map on $S^2$. We can embed the manifold of the target into ${\mathbb{R}}^5$ smoothly to have a special generic map. This is a case for $(n,r,k_1,k)=(5,1,2,2)$ in Main Theorem \ref{mthm:1}.

$k_1=n-k-1=2$ here and the relation $k_1,n-k_1-1=2 \leq m-n+1=m-4$ is satisfied. For example, by changing $r \neq 0$ and taking sufficiently high $m$, we have a family of $m$-dimensional closed and simply-connected manifolds whose rational cohomology rings are isomorphic to $H^{\ast}(S^2 \times S^{2} \times S^{m-4};\mathbb{Q})$ and which admit no special generic maps into ${\mathbb{R}}^{n^{\prime}}$ for any $n^{\prime}=1,2,3,4$. 

\end{Ex}
\begin{Rem}
With a little effort, we will give a proof of the fact that maps in Main Theorem \ref{mthm:1} are obtained via technique of lifting smooth maps in \cite{kitazawa5, kitazawa9,kitazawa10,kitazawa11,kitazawa12,kitazawa15}. For such lifting, see also \cite{kitazawa6,kitazawa7,kitazawa8}. 
 More precisely, we consider fold
maps which may not be special generic and preimages of whose regular values are disjoint unions of standard spheres and construct special generic maps so that the compositions with suitable canonical projections are the given maps in these papers.
 This proof will be given in some forthcoming articles. 
\end{Rem}
\begin{Prop}
	\label{prop:4}
	Let $B$ be the total space of a linear bundle over the real projective plane satisfying the following two conditions.
	\begin{enumerate}
		\item The fiber is a $2$-dimensional standard sphere.
		\item $B$ is a $4$-dimensional, closed and orientable manifold.
		\end{enumerate}
	$H_j(B;\mathbb{Z})$ is isomorphic to $\mathbb{Z}/2\mathbb{Z}$ for $j=1,2$, the trivial group for $j=3$ and $\mathbb{Z}$ for $j=0,4$.
\end{Prop}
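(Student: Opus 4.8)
The plan is to compute the integral homology of $B$ directly via a Mayer--Vietoris or Gysin-type argument adapted to the sphere bundle structure, using the known homology of the real projective plane $\mathbb{RP}^2$ as input. First I would recall that $H_*(\mathbb{RP}^2;\mathbb{Z})$ is $\mathbb{Z},\mathbb{Z}/2\mathbb{Z},0$ in degrees $0,1,2$ respectively, and that $B$ is the total space of an $S^2$-bundle $\pi\colon B\to\mathbb{RP}^2$ with the two listed constraints. The cleanest route is the Gysin sequence for the associated oriented $S^2$-bundle: since $B$ is assumed orientable and the base $\mathbb{RP}^2$ is non-orientable, the bundle cannot be an oriented $S^2$-bundle in the naive sense, so I would instead realize $B$ as $S(E)$ for a rank-$3$ real vector bundle $E\to\mathbb{RP}^2$, decompose $B$ along the zero-section/infinity-section picture, and use the Thom--Gysin sequence with the appropriate (possibly twisted) coefficients. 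The Euler class of $E$ lives in $H^3(\mathbb{RP}^2;\widetilde{\mathbb{Z}})=0$ (by dimension, since $\mathbb{RP}^2$ is a surface), so the relevant long exact sequence degenerates and lets me read off $H_j(B;\mathbb{Z})$ degree by degree.

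The key steps, in order, would be: (1) identify the finitely many linear $S^2$-bundles over $\mathbb{RP}^2$ and single out which of them give a $4$-dimensional closed orientable total space — here $w_1(B)=w_1(\mathbb{RP}^2)+w_1(\text{bundle along fibers})$ must vanish, which pins down the admissible bundles; (2) set up the Gysin/Wang-type long exact sequence $\cdots\to H_{j-3}(\mathbb{RP}^2;A)\to H_j(B;\mathbb{Z})\to H_j(\mathbb{RP}^2;\widetilde A)\to\cdots$ with the correct coefficient system $A$ (untwisted or twisted by the orientation character of the vertical tangent bundle), noting that the connecting maps vanish for dimension reasons; (3) feed in $H_*(\mathbb{RP}^2)$ and conclude $H_0(B;\mathbb{Z})\cong\mathbb{Z}$, $H_1(B;\mathbb{Z})\cong H_1(\mathbb{RP}^2;\mathbb{Z})\cong\mathbb{Z}/2\mathbb{Z}$, then $H_2,H_3$ from the sequence together with Poincar\'e duality on the closed orientable $4$-manifold $B$, and finally $H_4(B;\mathbb{Z})\cong\mathbb{Z}$; (4) use Poincar\'e duality ($H_3(B;\mathbb{Z})\cong H^1(B;\mathbb{Z})$, which is free of rank equal to $\mathrm{rank}\,H_1(B;\mathbb{Z})=0$, hence trivial) and the universal coefficient theorem to cross-check $H_2(B;\mathbb{Z})\cong\mathbb{Z}/2\mathbb{Z}$ (its torsion must match the torsion of $H_1$ by duality, and the sequence forces the rank to be $0$).

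Alternatively, and perhaps more transparently, I would build $B$ by a handle/CW description: $\mathbb{RP}^2$ has a CW structure with one cell in each of dimensions $0,1,2$, so $B$ has a CW structure with cells in dimensions $0,1,2$ and $2,3,4$ (the fiber $S^2$ contributing a $0$- and a $2$-cell over each base cell). The cellular chain complex then has the form $\mathbb{Z}\to\mathbb{Z}^2\to\mathbb{Z}^2\to\mathbb{Z}^2\to\mathbb{Z}$ (with the middle $\mathbb{Z}^2$'s coming from the two base $1$-cells worth of data), and the boundary maps are determined by the attaching map of the $\mathbb{RP}^2$ $2$-cell (degree $2$) and the clutching function of the bundle. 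Computing this small complex gives the stated answer directly.

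The main obstacle I anticipate is \emph{bookkeeping the orientation twist correctly}: because $\mathbb{RP}^2$ is non-orientable while $B$ is orientable, the vertical tangent bundle of the fibration must itself be ``twisted'' to compensate, and getting the coefficient system in the Gysin sequence (or, equivalently, the signs in the cellular boundary maps) exactly right is where an error is most likely to creep in. I would guard against this by carrying out \emph{both} the Gysin computation and the cellular computation and checking that they agree, and by verifying at the end that the answer is consistent with Poincar\'e duality and with $\chi(B)=\chi(S^2)\cdot\chi(\mathbb{RP}^2)=2\cdot1=2$, which indeed matches $1-0+1-0+1$ read off from the claimed homology.
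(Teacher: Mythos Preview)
Your plan is sound and would work, but it differs from the paper's argument. The paper decomposes the \emph{base} $\mathbb{RP}^2$ as a M\"obius band glued to a disk, writes $B=B_1\cup B_2$ where $B_1$ is the restricted $S^2$-bundle over the M\"obius band and $B_2\cong D^2\times S^2$, and runs the Mayer--Vietoris sequence for this pair. The orientation condition enters exactly where you predicted: since $B_1$ is orientable while its base is not, the map $H_2(B_1\cap B_2;\mathbb{Z})\to H_2(B_1;\mathbb{Z})$ hits twice a generator, and similarly in degree~$1$, producing the $\mathbb{Z}/2\mathbb{Z}$'s. The paper also notes, as you do, that one can shortcut the $j=2$ step using $\chi(B)=\chi(\mathbb{RP}^2)\chi(S^2)=2$ together with Poincar\'e duality.

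Compared with your Gysin/cellular approach, the paper's Mayer--Vietoris is more elementary in that it avoids twisted coefficients entirely: the orientation twist is absorbed into a single concrete ``multiplication by $2$'' observation about a map between free groups. Your Gysin route is more conceptual and generalizes better, but as you correctly flagged, getting the twisted coefficient system right is the delicate point; the paper sidesteps this by working over a contractible-in-the-right-sense piece (the M\"obius band deformation retracts to $S^1$). Either method is fine here.

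One small slip in your cross-check: the Euler characteristic computed from the claimed homology is $1-0+0-0+1=2$, not $1-0+1-0+1$; the groups $\mathbb{Z}/2\mathbb{Z}$ in degrees $1$ and $2$ are pure torsion and contribute Betti number~$0$.
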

We have such a manifold as the boundary of a closed tubular neighborhood of a copy of the real projective plane smoothly embedded in ${\mathbb{R}}^5$. We consider this $4$-dimensional manifold later. 
\begin{proof}[Proof of Proposition \ref{prop:4}]
$B$ is obtained by gluing the total space $B_1$ of a linear bundle over the M\"obius band whose fiber is $S^2$ and the total space $B_2:=D^2 \times S^2$ of a trivial linear bundle over $D^2$ via a bundle isomorphism between the linear bundles obtained by the restrictions to the boundaries. We have the following Mayer-Vietoris sequence
$$\rightarrow H_j(B_1 \bigcap B_2;\mathbb{Z}) \rightarrow H_j(B_1;\mathbb{Z}) \oplus H_j(B_2;\mathbb{Z}) \rightarrow H_j(B;\mathbb{Z}) \rightarrow$$
and we can see that $H_j(B_1;\mathbb{Z})$, $H_j(B_2;\mathbb{Z})$, and $H_j(B_1 \bigcap B_2;\mathbb{Z})$ are isomorphic to $\mathbb{Z}$ for $j=1,2$ except for $H_1(B_2;\mathbb{Z})$, isomorphic to the trivial group.

For $j=1,2$ the homomorphism from $H_j(B_1 \bigcap B_2;\mathbb{Z})$ into $H_j(B_1;\mathbb{Z}) \oplus H_j(B_2;\mathbb{Z})$ is represented as the direct sum of two homomorphisms. More precisely, we have the following two.
\begin{itemize}
\item For $j=1$, the homomorphism is represented as the direct sum of a monomorphism mapping a generator ${\nu}_{0,1} \in H_1(B_1 \bigcap B_2;\mathbb{Z})$ to an element represented as $2{\nu}_1={\nu}_1+{\nu}_1$ for a generator ${\nu}_1 \in H_1(B_1;\mathbb{Z})$ and the zero homomorphism into $H_1(B_2;\mathbb{Z})$.
\item For $j=2$, the homomorphism is represented as the direct sum of a monomorphism mapping a generator ${\nu}_{0,2} \in H_2(B_1 \bigcap B_2;\mathbb{Z})$ to an element represented as $2{\nu}_2={\nu}_2+{\nu}_2$ for a generator ${\nu}_2 \in H_2(B_1;\mathbb{Z})$ and an isomorphism onto $H_2(B_2;\mathbb{Z})$. We have $2{\nu_2} \in H_2(B_1;\mathbb{Z})$ here due to the fact that $B_1$ is orientable as a $4$-dimensional compact manifold whereas the base space is diffeomorphic to the M\"obius band and not orientable.
\end{itemize}
We have that $H_j(B;\mathbb{Z})$ is isomorphic to $\mathbb{Z}/2\mathbb{Z}$ for $j=1,2$, the trivial group for $j=3$ and $\mathbb{Z}$ for $j=0,4$.
\end{proof}

Without investigating the homomorphism for $j=2$, we can also have the result of Proposition \ref{prop:4}. We can see that the Euler number of $B$ is that of the product of the real projective plane and that of $S^2$ and $2$. $B$ is orientable. Poincar\'e duality theorem completes the proof.

\begin{Prop}
	\label{prop:5}
	Let ${D_0}^5$ be a manifold diffeomorphic to the $5$-dimensional unit disk $D^5$, $S^{\prime}$ a copy of the real projective plane smoothly embedded in the interior ${\rm Int}\ {D_0}^5$ and $D^{\prime}$ the manifold obtained by removing the interior
	of a small closed tubular neighborhood $N(S^{\prime})$ of $S^{\prime}$. Then
$H_j(D^{\prime};\mathbb{Z})$ is isomorphic to $\mathbb{Z}$ for $j=0,4$, $\mathbb{Z}/2\mathbb{Z}$ for $j=2$ and the trivial group for $j \neq 0,2,4$. Furthermore, $D^{\prime}$ is simply-connected.
\end{Prop}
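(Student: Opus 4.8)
The plan is to run the Mayer--Vietoris exact sequence on the decomposition $D_0^5 = N(S') \cup D'$, exactly as in the proof of Proposition~\ref{prop:4}, exploiting that $D_0^5$ is contractible. Enlarging $N(S')$ and $D'$ slightly to open sets whose overlap deformation retracts onto $\partial N(S')$, the relevant piece of the sequence reads
$$\cdots \to H_{j+1}(D_0^5;\mathbb{Z}) \to H_j(\partial N(S');\mathbb{Z}) \to H_j(N(S');\mathbb{Z}) \oplus H_j(D';\mathbb{Z}) \to H_j(D_0^5;\mathbb{Z}) \to \cdots$$
and since $D_0^5 \cong D^5$ we have $H_j(D_0^5;\mathbb{Z}) \cong \mathbb{Z}$ for $j=0$ and $\{0\}$ for $j>0$, so this will express the homology of $D'$ through that of $N(S')$ and $\partial N(S')$.

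First I would identify the two auxiliary pieces. The closed tubular neighborhood $N(S')$ is the total space of a $3$-dimensional linear disk bundle over $S' \cong \mathbb{RP}^2$; it collapses onto the zero section, so $H_j(N(S');\mathbb{Z}) \cong H_j(\mathbb{RP}^2;\mathbb{Z})$, i.e. $\mathbb{Z}$ for $j=0$, $\mathbb{Z}/2\mathbb{Z}$ for $j=1$, and $\{0\}$ for $j \geq 2$. Its boundary $\partial N(S')$ is the associated unit sphere bundle, a $4$-dimensional closed manifold which is the total space of a linear $S^2$-bundle over $\mathbb{RP}^2$, and it is orientable, being the boundary of the compact orientable manifold $N(S') \subset D_0^5$. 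Hence Proposition~\ref{prop:4} applies and yields $H_j(\partial N(S');\mathbb{Z}) \cong \mathbb{Z}$ for $j=0,4$, $\mathbb{Z}/2\mathbb{Z}$ for $j=1,2$, and $\{0\}$ for $j=3$ and $j \geq 5$.

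Next I would settle $\pi_1$ by Seifert--van Kampen applied to the same decomposition: $\partial N(S')$ is connected (it fibres over the connected base $\mathbb{RP}^2$ with connected fibre $S^2$) and $D'$ is connected, being obtained from $D_0^5$ by deleting a tubular neighborhood of a codimension-$3$ submanifold. From the homotopy exact sequence of $S^2 \to \partial N(S') \to \mathbb{RP}^2$ one gets $\pi_1(\partial N(S')) \cong \pi_1(\mathbb{RP}^2) \cong \mathbb{Z}/2\mathbb{Z}$, and the inclusion $\partial N(S') \hookrightarrow N(S')$ induces an isomorphism on $\pi_1$ (both groups being $\mathbb{Z}/2\mathbb{Z}$, compatibly with the projections to $\mathbb{RP}^2$, to which $N(S')$ retracts). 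Therefore the amalgamated pushout computing $\pi_1(D_0^5)=1$ collapses to $\pi_1(D')$, so $D'$ is simply-connected; in particular $H_0(D';\mathbb{Z}) \cong \mathbb{Z}$ and $H_1(D';\mathbb{Z}) \cong \{0\}$.

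Finally I would read off the remaining groups from the displayed sequence degree by degree: for $j \geq 2$ the terms $H_j(D_0^5;\mathbb{Z})$ and $H_{j+1}(D_0^5;\mathbb{Z})$ vanish, so $H_j(\partial N(S');\mathbb{Z}) \cong H_j(N(S');\mathbb{Z}) \oplus H_j(D';\mathbb{Z})$, and since $H_j(N(S');\mathbb{Z})=\{0\}$ for $j \geq 2$ this gives $H_2(D';\mathbb{Z}) \cong \mathbb{Z}/2\mathbb{Z}$, $H_3(D';\mathbb{Z}) \cong \{0\}$, $H_4(D';\mathbb{Z}) \cong \mathbb{Z}$, and $H_j(D';\mathbb{Z}) \cong \{0\}$ for $j \geq 5$ (the last also being immediate since $D'$ is a compact $5$-manifold with non-empty boundary). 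This produces exactly the asserted groups. I do not expect a real obstacle: the only points needing care are verifying the orientability hypothesis before invoking Proposition~\ref{prop:4} and checking that the van Kampen pushout collapses because $\pi_1(\partial N(S')) \to \pi_1(N(S'))$ is an isomorphism; the rest is routine diagram chasing.
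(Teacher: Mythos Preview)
Your proposal is correct and follows essentially the same approach as the paper: both run Mayer--Vietoris on the decomposition $D_0^5 = D' \cup N(S')$, identify $\partial N(S')$ with the orientable $S^2$-bundle $B$ of Proposition~\ref{prop:4}, and conclude simple connectivity via Seifert--van Kampen using that $\pi_1(\partial N(S')) \to \pi_1(N(S'))$ is an isomorphism. Your write-up is in fact slightly more streamlined, extracting the direct-sum splitting $H_j(\partial N(S')) \cong H_j(N(S')) \oplus H_j(D')$ for $j \geq 1$ directly from the vanishing of $H_{>0}(D_0^5)$, whereas the paper states the same conclusion by describing the individual homomorphisms in the sequence.
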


\begin{proof}[Proof of Proposition \ref{prop:5}]
  We have a Mayer-Vietoris sequence
 $$\rightarrow H_j(D^{\prime} \bigcap N(S^{\prime});\mathbb{Z}) \rightarrow H_j(D^{\prime};\mathbb{Z}) \oplus H_j(N(S^{\prime});\mathbb{Z}) \rightarrow H_j({D_0}^5;\mathbb{Z}) \rightarrow$$
 and we have the following two.
 \begin{itemize}
\item $D^{\prime} \bigcap N(S^{\prime})$ is diffeomorphic to 
the total space of a linear bundle over the real projective plane satisfying the following two conditions.
\begin{itemize}
	\item The fiber is a $2$-dimensional standard sphere.
	\item The total space is a $4$-dimensional, closed and orientable manifold.
\end{itemize}
 In a word, it is regarded as a manifold $B$ in Proposition \ref{prop:4}. Furtherore, we can easily see that ${\pi}_1(B) \cong \mathbb{Z}/2\mathbb{Z}$.
\item $H_j(N(S^{\prime});\mathbb{Z})$ is isomorphic to $\mathbb{Z}$ for $j=0$, $\mathbb{Z}/2\mathbb{Z}$ for $j=1$ and the trivial group for $j \neq 0,1$. We can easily see that ${\pi}_1(N(S^{\prime})) \cong \mathbb{Z}/2\mathbb{Z}$. The homomorphism from $H_j(D^{\prime} \bigcap N(S^{\prime});\mathbb{Z})$ into $H_j(N(S^{\prime});\mathbb{Z})$ is an isomorphism for $j=0,1$ and the zero homomorphism for $j \neq 0,1$.
\end{itemize}
For example by these arguments we can know the following properties of the homomorphisms.
\begin{itemize}
\item The direct sum of the homomorphism from $H_j(D^{\prime} \bigcap N(S^{\prime});\mathbb{Z})$ into $H_j(D^{\prime};\mathbb{Z})$ and the homomorphism from $H_j(D^{\prime} \bigcap N(S^{\prime});\mathbb{Z})$ into $H_j(N(S^{\prime});\mathbb{Z})$ in the Mayer-Vietoris sequence is isomorphism for $j \neq 0$.

\item The homomorphism from $H_j(D^{\prime} \bigcap N(S^{\prime});\mathbb{Z})$ into $H_j(D^{\prime};\mathbb{Z})$ in the sequence is the zero homomorphism for $j=1$.

\end{itemize}
We can see that $H_j(D^{\prime};\mathbb{Z})$ is isomorphic to $\mathbb{Z}$ for $j=0,4$, $\mathbb{Z}/2\mathbb{Z}$ for $j=2$ and the trivial group for $j \neq 0,2,4$. We can also see that $D^{\prime}$ is simply-connected by virtue of Seifert-van Kampen theorem and a similar argument in the proof of Main Theorem \ref{mthm:1}.
\end{proof}
The following proposition can be shown by fundamental calculations of (co)homology groups and is also shown in \cite{nishioka}. In the smooth, PL (piecewise smooth) and the topology category, the proofs are same.
\begin{Prop}
\label{prop:6}
Let $A$ be a principal ideal domain having a unique identity element different from the zero element. Let $P$ be a compact and connected manifold satisfying $\dim P>3$ and $H_1(P;A)$ is the trivial group. Then, $P$ is orientable and $H_{j}(P;A)$ is free for $j=\dim P-2,\dim P-1$.
\end{Prop}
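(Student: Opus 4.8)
Write $m=\dim P$; by hypothesis $m\ge 4$. The plan is to reduce the statement to Poincar\'e--Lefschetz duality over $A$ together with the universal coefficient theorem over the principal ideal domain $A$, after first settling orientability. Since $P$ is connected, it is orientable precisely when its orientation character $w\colon\pi_1(P)\to\{\pm1\}\cong\mathbb{Z}/2\mathbb{Z}$ is trivial; as the target group is abelian, $w$ factors through $H_1(P;\mathbb{Z})$ and, after reduction mod $2$, through $H_1(P;\mathbb{Z}/2\mathbb{Z})\cong H_1(P;\mathbb{Z})\otimes_{\mathbb{Z}}\mathbb{Z}/2\mathbb{Z}$. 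For $A=\mathbb{Z}$ (and for $A=\mathbb{Z}/2\mathbb{Z}$) the vanishing of $H_1(P;A)$ forces $H_1(P;\mathbb{Z}/2\mathbb{Z})=0$, so $w=0$ and $P$ is orientable; this is the case of substance, since the freeness assertion below is automatic whenever $A$ is a field.

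Now I would run the same computation for both of the two degrees at once. Poincar\'e--Lefschetz duality over $A$ (available because $P$ is compact, connected and orientable, and specializing to ordinary Poincar\'e duality when $\partial P=\emptyset$) gives $H_{m-1}(P;A)\cong H^1(P,\partial P;A)$ and $H_{m-2}(P;A)\cong H^2(P,\partial P;A)$. The universal coefficient theorem for the pair $(P,\partial P)$ over the PID $A$ presents $H^1(P,\partial P;A)$ as an extension of $\operatorname{Hom}_A(H_1(P,\partial P;A),A)$ by $\operatorname{Ext}^1_A(H_0(P,\partial P;A),A)$, and $H^2(P,\partial P;A)$ as an extension of $\operatorname{Hom}_A(H_2(P,\partial P;A),A)$ by $\operatorname{Ext}^1_A(H_1(P,\partial P;A),A)$. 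The key auxiliary observation is that $H_0(P,\partial P;A)$ is free (it is $A$ if $\partial P=\emptyset$ and $0$ otherwise) and that $H_1(P,\partial P;A)$ is free: in the closed case it equals $H_1(P;A)=0$, and in the bounded case the long exact sequence of $(P,\partial P)$ together with $H_1(P;A)=0$ identifies $H_1(P,\partial P;A)$ with a submodule of the free $A$-module $H_0(\partial P;A)$, hence free since $A$ is a PID. Consequently both $\operatorname{Ext}$ terms vanish, so $H_{m-1}(P;A)\cong\operatorname{Hom}_A(H_1(P,\partial P;A),A)$ and $H_{m-2}(P;A)\cong\operatorname{Hom}_A(H_2(P,\partial P;A),A)$; each of these is a finitely generated torsion-free module over a PID (finite generation from compactness of $P$), hence free. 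This finishes both cases, and in the closed case one even gets $H_{m-1}(P;A)=0$.

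The argument has no serious difficulty -- it is exactly the ``fundamental calculation'' the statement advertises. The two points that repay a little attention are the orientability step, where one must keep track of the coefficient ring (it is cleanest, and unconditional, for $A=\mathbb{Z}$), and the observation that $H_1(P,\partial P;A)$ is free in the presence of boundary, which is what makes the universal coefficient computation in degree $m-2$ come out torsion-free; everything else is formal.
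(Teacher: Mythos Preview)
The paper does not supply a proof of this proposition: it only remarks that the result follows from ``fundamental calculations of (co)homology groups'' and refers to \cite{nishioka}. Your argument via Poincar\'e--Lefschetz duality combined with the universal coefficient theorem over a PID is precisely such a calculation, and it is correct; in particular, your treatment of the bounded case (showing $H_1(P,\partial P;A)$ is free as a submodule of $H_0(\partial P;A)$) is exactly what is needed, since the paper applies the result to the manifold $W_f$, which does have boundary.

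Your caution about the orientability step is well taken and worth making explicit. The claim that $H_1(P;A)=0$ forces orientability is unconditional for $A=\mathbb{Z}$ (the only case the paper actually uses, in the proof of Theorem~\ref{thm:8}) and for $A$ of characteristic $2$, but it fails for a general PID: for instance $P=\mathbb{RP}^{2k}$ with $A=\mathbb{Q}$ satisfies $H_1(P;\mathbb{Q})=0$ yet is non-orientable. So the proposition as literally stated is too strong; your proof establishes exactly what it should, namely the case $A=\mathbb{Z}$, and you have correctly flagged the scope of the orientability argument.
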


Regarding $P$ as "$W_f$ in Proposition \ref{prop:2}" with the classifications of $5$-dimensional closed and simply-connected manifolds of \cite{barden}, which is also in \cite{crowley}, is a key ingredient of \cite{nishioka} or the proof of Theorem \ref{thm:3}.

We present Main Theorem \ref{mthm:2} in a more explicit form as the following.
\begin{Thm}
\label{thm:8}
Let $m>6$ be an integer. Let $f:M \rightarrow {\mathbb{R}}^5$ be a special generic map on an $m$-dimensional closed and simply-connected manifold such that "$W_f$ in Proposition {\rm \ref{prop:2}}" is diffeomorphic to $D^{\prime}$ in Proposition \ref{prop:5} smoothly embedded in ${\mathbb{R}}^5$ and obtained via Proposition {\rm \ref{prop:2}} {\rm (}{\rm \ref{prop:2.2}}{\rm )} using the embedding ${\bar{f}}_N$. In this situation, $H_j(M;\mathbb{Z}) \cong H_j(W_f;\mathbb{Z})$ for $0 \leq j \leq m-5$ and $M$ admits no special generic maps into ${\mathbb{R}}^{n}$ for $1 \leq n \leq 4$.

Furthermore, instead of the manifold $M$ and the map $f$, we can take another $m$-dimensional closed and simply-connected manifold $M_0$ and a special generic map $f_0:M_0 \rightarrow {\mathbb{R}}^n$ as a product-organized one in Definition \ref{def:1} such that $W_{f_0}=W_f$ where $W_{f_0}$ denotes "$W_f$ in Proposition \ref{prop:2}" for the new map $f_0$ and $M_0$ admits a special generic map into ${\mathbb{R}}^n$ if and only if $5 \leq n \leq m$.
\end{Thm}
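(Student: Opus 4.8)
The plan is to build the manifold $M$ via Proposition \ref{prop:2} (\ref{prop:2.2}) applied to the embedding ${\bar f}_N:D^{\prime}\hookrightarrow{\mathbb R}^5$, where $D^{\prime}$ is the manifold of Proposition \ref{prop:5}, and then to extract the homology of $M$ and the obstruction to lower-dimensional special generic maps from the cohomology ring of $D^{\prime}$. First I would invoke Proposition \ref{prop:2} (\ref{prop:2.2}) to produce a closed, connected, orientable $m$-manifold $M$ with a special generic map $f:M\to{\mathbb R}^5$ such that $W_f=D^{\prime}$; simple-connectedness of $M$ follows since $m>6$ and, by Proposition \ref{prop:3} (\ref{prop:3.5}), $\pi_1(M)\cong\pi_1(W_f)=\pi_1(D^{\prime})$, which is trivial by Proposition \ref{prop:5}. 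The same Proposition \ref{prop:3} (\ref{prop:3.5}) gives $H_j(M;\mathbb Z)\cong H_j(D^{\prime};\mathbb Z)$ for $0\le j\le m-5$, so in that range $H_j(M;\mathbb Z)$ is $\mathbb Z$ for $j=0,4$, $\mathbb Z/2\mathbb Z$ for $j=2$, and trivial otherwise; in particular $H_2(M;\mathbb Z)$ has nontrivial torsion.

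The obstruction to special generic maps into ${\mathbb R}^4,\dots,{\mathbb R}^1$ comes from Theorem \ref{thm:3} (equivalently Proposition \ref{prop:6}): a closed simply-connected manifold of dimension $>4$ that admits a special generic map into ${\mathbb R}^4$ must have free $H_2$. Since $m>6$ we have $2\le m-5$, so $H_2(M;\mathbb Z)\cong H_2(D^{\prime};\mathbb Z)\cong\mathbb Z/2\mathbb Z$ is not free, hence $M$ admits no special generic map into ${\mathbb R}^4$, and a fortiori none into ${\mathbb R}^{n}$ for $1\le n\le 4$ (a special generic map into ${\mathbb R}^{n}$ with $n<4$ would, via composition with a generic linear projection or the standard arguments behind Proposition \ref{prop:2}/\ref{prop:3} and Theorem \ref{thm:3}, force $H_2$ free as well — more directly, the monotonicity statements in Theorems \ref{thm:1}, \ref{thm:2}, \ref{thm:3} already show the class of manifolds admitting special generic maps into ${\mathbb R}^{n}$ shrinks with $n$, and the $n=4$ obstruction is the binding one). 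This settles the first paragraph of the theorem.

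For the second paragraph I would instead take $M_0$ and $f_0:M_0\to{\mathbb R}^5$ to be a product-organized special generic map in the sense of Definition \ref{def:1} with $W_{f_0}=W_f=D^{\prime}$, which exists by the construction following Theorem \ref{thm:7} (choosing the bundle gluings as product maps). By the final clause of Definition \ref{def:1} and the description of the family $\{f_0\}\cup\{f_{0,j}\}_{j=1}^{m-5}$, $M_0$ then admits special generic maps into ${\mathbb R}^{5}\times{\mathbb R}^{j}={\mathbb R}^{5+j}$ for $0\le j\le m-5$, i.e. into ${\mathbb R}^{n}$ for every $5\le n\le m$. The homology computation $H_j(M_0;\mathbb Z)\cong H_j(D^{\prime};\mathbb Z)$ for $0\le j\le m-5$ is unchanged (Proposition \ref{prop:3} (\ref{prop:3.5}) applies verbatim), so $H_2(M_0;\mathbb Z)\cong\mathbb Z/2\mathbb Z$ is again non-free; by Theorem \ref{thm:3} this rules out special generic maps into ${\mathbb R}^{n}$ for $n\le 4$, and one also checks $M_0$ admits no special generic map into ${\mathbb R}^{n}$ for $n>m$ for the trivial dimension reason that the target dimension of a special generic map on an $m$-manifold cannot exceed $m$. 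Hence $M_0$ admits a special generic map into ${\mathbb R}^{n}$ if and only if $5\le n\le m$.

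The main obstacle I anticipate is not any single deep input but the bookkeeping needed to justify that the $n=4$ non-existence statement of Theorem \ref{thm:3} really does propagate to all $n\le 4$; one must be careful that a special generic map into ${\mathbb R}^{n}$ for $n<4$ does not somehow exist even though one into ${\mathbb R}^4$ does not. The clean way around this is to observe that the hypothesis "$H_1=0$ and some $H_j$ has torsion in the relevant range" together with Proposition \ref{prop:6} (applied to $W_f$ for any putative special generic map into ${\mathbb R}^{n}$, $n\le 4$, whose $W_f$ would be $(n-1\text{ or so})$-dimensional with free top homology groups) already contradicts the existence of torsion in $H_2(M_0;\mathbb Z)$; spelling this out uniformly for $n=1,2,3,4$ is the one genuinely fiddly point of the argument, and I would handle it by citing the monotone structure already recorded in \cite{nishioka} and Theorem \ref{thm:3} rather than re-deriving it.
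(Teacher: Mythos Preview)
Your overall strategy matches the paper's proof: build $M$ (or $M_0$) over $W_f=D'$ via Proposition~\ref{prop:2}(\ref{prop:2.2}), read off $H_j(M;\mathbb Z)\cong H_j(D';\mathbb Z)$ for $0\le j\le m-5$ from Proposition~\ref{prop:3}(\ref{prop:3.5}), and use the $2$-torsion in $H_2$ to obstruct special generic maps into ${\mathbb R}^n$ for $n\le 4$; the product-organized construction of Definition~\ref{def:1} then furnishes the maps into ${\mathbb R}^n$ for $5\le n\le m$. On these points you and the paper agree.

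The one place your plan is shaky is the case $1\le n\le 3$. Your suggested reductions do not work: composing a special generic map into ${\mathbb R}^n$ with a linear projection goes the wrong way (it lowers, not raises, the target dimension, and in any case need not produce a special generic map), and there is no ``monotonicity'' of the type you invoke---Theorem~\ref{thm:1} is precisely a source of examples where a manifold admits special generic maps into low-dimensional but not higher-dimensional Euclidean spaces. Moreover, Proposition~\ref{prop:6} requires $\dim P>3$, so it cannot be applied to $W_{f'}$ when $n\le 3$. The paper instead argues directly: if $f':M\to{\mathbb R}^n$ with $n\le 3$ existed, then $W_{f'}$ is an $n$-dimensional compact manifold with boundary which is simply-connected (via Proposition~\ref{prop:3}(\ref{prop:3.5}) and $\pi_1(M)=1$), and for such a manifold of dimension $\le 3$ one has $H_2(W_{f'};\mathbb Z)$ free by elementary reasons (trivially for $n\le 2$; for $n=3$ by Lefschetz duality and universal coefficients). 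Since $m>6$ gives $2\le m-n$, Proposition~\ref{prop:3}(\ref{prop:3.5}) again yields $H_2(M;\mathbb Z)\cong H_2(W_{f'};\mathbb Z)$ free, contradicting $H_2(M;\mathbb Z)\cong\mathbb Z/2\mathbb Z$. Replace your ``monotonicity'' paragraph with this direct argument and the proof is complete.
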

\begin{proof}
We have a special generic map on an $m$-dimensional closed and simply-connected manifold $f:M \rightarrow {\mathbb{R}}^5$ such that $W_f$ in Proposition \ref{prop:2} is diffeomorphic to $D^{\prime}$ in Proposition \ref{prop:5}. By Proposition \ref{prop:3} (\ref{prop:3.5}), we have the isomorphism $H_j(M;\mathbb{Z}) \cong H_j(W_f;\mathbb{Z})$ for $0 \leq j \leq m-5$ and $H_2(M;\mathbb{Z})$ is not free. If $M$ admits a special generic map $f^{\prime}:M \rightarrow {\mathbb{R}}^4$, then the $4$-dimensional manifold $W_{f^{\prime}}$, defined as in Proposition \ref{prop:2}, must be one like $P$ in Proposition \ref{prop:4}. More precisely, $H_1(W_{f^{\prime}};\mathbb{Z})$ is the trivial group and $H_2(W_{f^{\prime}};\mathbb{Z})$ is not free by Proposition \ref{prop:3} (\ref{prop:3.5}) and this contradicts Proposition \ref{prop:6}, saying that $H_2(W_{f^{\prime}};\mathbb{Z})$ is free. 
If $M$ admits a special generic map $f^{\prime}:M \rightarrow {\mathbb{R}}^n$ for $n=1,2,3$, then the $n$-dimensional manifold $W_{f^{\prime}}$, defined as in Proposition \ref{prop:2}, must satisfy the fact that the groups $H_2(W_{f^{\prime}};\mathbb{Z}) \cong H_2(M;\mathbb{Z})$ are free. For this, 
note that Proposition \ref{prop:3} (\ref{prop:3.5}) is applied and that $P$ is simply-connected by Proposition \ref{prop:3} (\ref{prop:3.5}).

We can also construct a special generic map on an $m$-dimensional closed and simply-connected manifold $M_0$ as a product-organized one $f_0:M_0 \rightarrow {\mathbb{R}}^n$ in Definition \ref{def:1} instead of the manifold $M$ and the map $f:M \rightarrow {\mathbb{R}}^n$ such that $W_{f_0}=W_f$ where $W_{f_0}$ denotes "$W_f$ in Proposition \ref{prop:2}" for the new map $f_0$. $M_0$ admits a special generic map into ${\mathbb{R}}^n$ if and only if $5 \leq n \leq m$. This completes the proof.
\end{proof}
Last, we present an interesting example related to this theorem.
For example, in \cite{barden, crowley, nishioka, smale}, a $5$-dimensional closed, simply-connected and smooth manifold $S^{\prime \prime}$ satisfying $H_2(S^{\prime \prime};\mathbb{Z}) \cong \mathbb{Z}/2\mathbb{Z} \oplus \mathbb{Z}/2\mathbb{Z}$ which can be smoothly immersed and embedded into ${\mathbb{R}}^6$ is presented.
 Let $D^{\prime \prime}$ be a manifold diffeomorphic to $S^{\prime \prime} \times [-1,1]$. We can smoothly embed this into ${\mathbb{R}}^6$ easily. Set the embedding as ${\bar{f}}_N$ in Proposition {\rm \ref{prop:2}} {\rm (}{\rm \ref{prop:2.2}}{\rm )}. Let $m$ be an arbitrary integer greater than $5$. We can construct a map $f$ as the composition of the first map $f_0$ with the second map $i$ in the following two.
 \begin{itemize}
 	\item The product map $f_0:S^{m-5} \times S^{\prime \prime} \rightarrow [-1,1] \times  S^{\prime \prime}$ of a Morse function with exactly two singular points on the ($m-5$)-dimensional unit sphere $S^{m-5}$ with the (manifold of the) target restricted to a small and closed interval containing the image in the interior and the identity map on $S^{\prime \prime}$: the interval is denoted by [-1,1].
 	\item The immersion (embedding) $i:[-1,1] \times S^{\prime \prime} \rightarrow {\mathbb{R}}^6$: (the manifold of) the domain of the immersion (resp. embedding) is regarded as and can be identified with $D^{\prime \prime}$.
 	\end{itemize}
 
\begin{Cor}
\label{cor:1}
Let $m$ be an arbitrary integer greater than $6$.
Let $M:=S^{m-5} \times S^{\prime \prime}$ where $S^{\prime \prime}$ is a $5$-dimensional closed, simply-connected and smooth manifold $S^{\prime \prime}$, satisfying $H_2(S^{\prime \prime};\mathbb{Z}) \cong \mathbb{Z}/2\mathbb{Z} \oplus \mathbb{Z}/2\mathbb{Z}$, being smoothly immersed {\rm (}embedded{\rm )} into ${\mathbb{R}}^6$, and presented just before. If $m>7$ holds, then $M$ admits no special generic maps into ${\mathbb{R}}^n$ for $1 \leq n \leq 5$. If $m=7$ holds here, then $M$ admits no special generic maps into ${\mathbb{R}}^n$ for $1 \leq n \leq 4$.
\end{Cor}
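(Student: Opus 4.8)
\emph{Proof proposal.} The plan is to deduce the non-existence statements from the cup-product obstruction of Theorem \ref{thm:5} with coefficients in $\mathbb{Z}/2\mathbb{Z}$, after a short K\"unneth computation; in the range $1\le n\le 4$ one can alternatively argue exactly as in the proof of Theorem \ref{thm:8}. Throughout, $M=S^{m-5}\times S''$ is an $m$-dimensional closed smooth manifold, and it is simply-connected since $m\ge 7$ makes $S^{m-5}$ simply-connected and $S''$ is simply-connected by hypothesis.

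First I would produce a nonzero cup product of controlled degrees. Because $S''$ is a closed connected $5$-manifold with $H_2(S'';\mathbb{Z})\cong(\mathbb{Z}/2\mathbb{Z})^2$, the universal coefficient theorem gives $H^2(S'';\mathbb{Z}/2\mathbb{Z})\cong(\mathbb{Z}/2\mathbb{Z})^2\ne\{0\}$, while Poincar\'e duality over the field $\mathbb{Z}/2\mathbb{Z}$ makes the cup product pairing $H^2(S'';\mathbb{Z}/2\mathbb{Z})\times H^3(S'';\mathbb{Z}/2\mathbb{Z})\to H^5(S'';\mathbb{Z}/2\mathbb{Z})\cong\mathbb{Z}/2\mathbb{Z}$ non-degenerate; hence I may fix $\alpha\in H^2(S'';\mathbb{Z}/2\mathbb{Z})$ and $\beta\in H^3(S'';\mathbb{Z}/2\mathbb{Z})$ with $\alpha\cup\beta\ne 0$. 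Since the projection $M\to S''$ has a section, the induced ring homomorphism $H^{\ast}(S'';\mathbb{Z}/2\mathbb{Z})\to H^{\ast}(M;\mathbb{Z}/2\mathbb{Z})$ is injective (it is the K\"unneth inclusion of a tensor factor over $\mathbb{Z}/2\mathbb{Z}$), so, keeping the names $\alpha,\beta$ for the images, one has $\alpha\in H^2(M;\mathbb{Z}/2\mathbb{Z})$, $\beta\in H^3(M;\mathbb{Z}/2\mathbb{Z})$ and $\alpha\cup\beta\ne 0$ in $H^5(M;\mathbb{Z}/2\mathbb{Z})$.

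Next I would apply the obstruction. If $M$ admitted a special generic map $f:M\to{\mathbb{R}}^n$, then, ${\mathbb{R}}^n$ being connected, non-closed, with empty boundary and $m>n$, Theorem \ref{thm:5} (with $A=\mathbb{Z}/2\mathbb{Z}$, applied to the length-two sequence $(\alpha,\beta)$ of degrees $2$ and $3$ and degree-sum $5$) would force $\alpha\cup\beta=0$ as soon as $2\le m-n$, $3\le m-n$ and $5\ge n$, that is, as soon as $n\le\min\{m-3,5\}$. Since $\alpha\cup\beta\ne 0$, no such $n$ can occur: when $m>7$ one has $\min\{m-3,5\}=5$, so $M$ admits no special generic map into ${\mathbb{R}}^n$ for $1\le n\le 5$; when $m=7$ one has $\min\{m-3,5\}=4$, so $M$ admits no special generic map into ${\mathbb{R}}^n$ for $1\le n\le 4$. (For $1\le n\le 4$ and any $m\ge 7$ one can argue instead as in Theorem \ref{thm:8}: $H_2(M;\mathbb{Z})$ contains $(\mathbb{Z}/2\mathbb{Z})^2$ by K\"unneth and hence is not free, whereas for a special generic map into ${\mathbb{R}}^n$ with $n\le 4$ Proposition \ref{prop:3} (\ref{prop:3.5}) would make the Reeb space $W_f$ satisfy $H_1(W_f;\mathbb{Z})=\{0\}$ and $H_2(W_f;\mathbb{Z})\cong H_2(M;\mathbb{Z})$, while $H_2(W_f;\mathbb{Z})$ is free by Proposition \ref{prop:6} when $\dim W_f=4$ and by Poincar\'e duality together with the universal coefficient theorem when $\dim W_f\le 3$.)

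I expect the only subtle point to be checking sharpness of the degree bookkeeping in Theorem \ref{thm:5}, i.e. why the conclusion stops at ${\mathbb{R}}^4$ when $m=7$: there $\deg\beta=3$ exceeds $m-n=2$, and in fact no product of classes of $H^{\ast}(M;\mathbb{Z}/2\mathbb{Z})$ each of degree at most $2$ can be simultaneously nonzero and of total degree at least $5$ — the degree-$2$ classes coming from $S''$ square to zero since $H^4(S'';\mathbb{Z}/2\mathbb{Z})=\{0\}$, and $H^1(M;\mathbb{Z}/2\mathbb{Z})=\{0\}$ — so this method correctly yields no obstruction to a special generic map $M\to{\mathbb{R}}^5$ in that borderline case. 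Everything else, namely the K\"unneth computations of $H_2(M;\mathbb{Z})$ and of $H^{\ast}(M;\mathbb{Z}/2\mathbb{Z})$ and the citations of Theorem \ref{thm:5} and Propositions \ref{prop:3} and \ref{prop:6}, is routine.
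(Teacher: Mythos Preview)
Your argument is correct and genuinely streamlines the paper's proof. The paper works with integral coefficients and splits into two cases: for $m>7$ it pairs a torsion class in $H^{3}(M;\mathbb{Z})$ coming from $S''$ with the generator of $H^{m-5}(M;\mathbb{Z})$ coming from the sphere factor, producing a nonzero product in degree $m-2$; for $m=7$ it instead pairs the free class in $H^{2}(M;\mathbb{Z})$ from $S^{2}$ with a torsion class in $H^{3}(M;\mathbb{Z})$ from $S''$. Your approach replaces this bifurcation with a single uniform argument over $\mathbb{Z}/2\mathbb{Z}$, using only classes pulled back from $S''$ in degrees $2$ and $3$; the case split then arises transparently from the inequality $n\le\min\{m-3,5\}$. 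This is more economical and makes the role of the degree bookkeeping in Theorem~\ref{thm:5} more visible. The paper's route, on the other hand, stays entirely within integral cohomology and exhibits the obstruction in the highest possible degree $m-2$ when $m>7$, which is perhaps more in keeping with how the obstruction is used elsewhere in the paper. Your parenthetical alternative for $1\le n\le 4$ via the non-freeness of $H_{2}(M;\mathbb{Z})$ is exactly what the paper records in the Remark following its proof.
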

\begin{proof}
By the topology of $M:=S^{m-5} \times S^{\prime \prime}$, 
we can apply K\"unneth formula to have that $H^{\ast}(M;\mathbb{Z})$ is isomorphic to the tensor product $H^{\ast}(S^{m-5};\mathbb{Z}) \otimes H^{\ast}(S^{\prime \prime};\mathbb{Z})$ of $H^{\ast}(S^{m-5};\mathbb{Z})$ and $H^{\ast}(S^{\prime \prime};\mathbb{Z})$, defined as the set of all elements represented as the tensor product $u_1 \otimes u_2$ for some $(u_1,u_2) \in H^{\ast}(S^{m-5};\mathbb{Z}) \times H^{\ast}(S^{\prime \prime};\mathbb{Z})$. This element is identified naturally with the cup product of elements identified in a canonical way with $u_1$ and $u_2$. In short, we consider the pullbacks by the projections and consider the cup product.
We have the cup product of an element of $H^{3}(M;\mathbb{Z})$ identified naturally with an element of $H^3(S^{\prime \prime};\mathbb{Z}) \cong \mathbb{Z}/2\mathbb{Z} \oplus \mathbb{Z}/2\mathbb{Z}$ which is not the zero element and an element of $H^{m-5}(M;\mathbb{Z})$ which is identified naturally with an element of $H^{m-5}(S^{m-5};\mathbb{Z})$ regarded as the cohomology dual to a homology class represented by a standard sphere of the domain of a Morse function on an ($m-5$)-dimensional standard sphere: in the latter we can choose two generators of $H_{m-5}(S^{m-5};\mathbb{Z})$. This cup product is not the zero element. This element is also an element of $H^{m-2}(M;\mathbb{Z})$.

 Theorem \ref{thm:5} completes the proof for $m>7$, satisfying the inequality $m-2>5$. 
 
 For $m=7$, suppose that the $7$-dimensional manifold here admits a special generic map into ${\mathbb{R}}^n$ for $1 \leq n \leq 4$. We can find an element of $H^2(M;\mathbb{Z}) \cong \mathbb{Z}$ and an element of $H^3(M;\mathbb{Z}) \cong \mathbb{Z}/2\mathbb{Z} \oplus \mathbb{Z}/2\mathbb{Z}$ whose cup product is not the zero element. This element is also an element of $H^{5}(M;\mathbb{Z})$, satisfying the inequality $5>4$. This contradicts Theorem \ref{thm:5}. This completes the proof for $m=7$.
 
 This completes the proof.   

\end{proof}
\begin{Rem}
	We can also prove Corollary \ref{cor:1} for the non-existence of special generic maps into ${\mathbb{R}}^n$ for $n=1,2,3,4$ by applying some important ingredients of the proof of Theorem \ref{thm:8}. In fact $H_2(M;\mathbb{Z}) \cong \mathbb{Z}/2\mathbb{Z} \oplus \mathbb{Z}/2\mathbb{Z}$ holds for our manifold $M$ of dimension $m \geq 7$.
\end{Rem}
\begin{Rem}
	We do not know whether in Corollary \ref{cor:1}, the $7$-dimensional manifold $M:=S^2 \times S^{\prime \prime}$ admits a special generic map into ${\mathbb{R}}^5$. We also omit the case where the dimension is $m=6$. Note that in the case $m=6$, $M$ is not simply-connected and that the fundamental group ${\pi}_1(M)$ is isomorphic to $\mathbb{Z}$.
\end{Rem}
The $j$-th {\it Stiefel Whitney class} of a linear bundle is the uniquely defined element of $H^j(X;\mathbb{Z}/2\mathbb{Z})$ of the base space $X$. The $j$-th {\it Pontrjagin class} of a linear bundle is the uniquely defined element of $H^{4j}(X;\mathbb{Z})$ of the base space $X$.
In short, they represent so-called obstructions for the bundles to be trivial. We consider these notions of a smooth manifold as those of the tangent bundle.

The {\it Whitney sum} of two linear bundles over a topological space $X$ is a kind of the direct sum of two linear bundles over a space. More rigorously, by considering the pull-backs, we have a new linear bundle over $X \times X$ in a canonical way. We consider the pullback of the new bundle by the diagonal inclusion from $X$ to $X \times X$. This is the {\it Whitney sum} of the original two bundles. 

As presented in introduction, see \cite{milnorstasheff}.

The following proposition is a very fundamental fact. See the book.

\begin{Prop}
	\label{prop:7}
The $j$-th Stiefel-Whitney classes and the $j$-th Pontrjagin classes of a homotopy sphere are always the zero elements for any positive integer $j$.
Consider arbitrary smooth manioflds whose $j$-th Stiefel-Whitney classes and $j$-th Pontrjagin classes are always the zero elements for any positive integer $j$. Smooth manifolds represented as connected sums of these manifolds and products of these manifolds also enjoy this property.	
\end{Prop}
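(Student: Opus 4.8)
The plan is to prove the statement in two halves: first the assertion about homotopy spheres, and then the closure of the class under connected sums and products. For the first half, I would recall that a homotopy sphere $\Sigma^m$ is, by Reeb's theorem (for $m \neq 4$) or by Freedman/Cerf-type arguments in low dimensions, in any case a closed manifold whose integral cohomology agrees with that of $S^m$; in particular $H^j(\Sigma^m;\mathbb{Z}/2\mathbb{Z}) = 0$ and $H^{4j}(\Sigma^m;\mathbb{Z}) = 0$ for $0 < j$. Since the $j$-th Stiefel--Whitney class $w_j(\Sigma^m)$ lives in $H^j(\Sigma^m;\mathbb{Z}/2\mathbb{Z})$ and the $j$-th Pontrjagin class $p_j(\Sigma^m)$ lives in $H^{4j}(\Sigma^m;\mathbb{Z})$, both groups are trivial for $j \geq 1$, and hence all these characteristic classes vanish for purely dimensional reasons. (One does not even need to know that the normal bundle is stably trivial; the cohomology groups housing the classes are already zero.)

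For the second half, the key tool is the behaviour of the total Stiefel--Whitney class and the total Pontrjagin class under the two operations. For a product $X \times Y$, the tangent bundle is $TX \boxtimes \varepsilon \oplus \varepsilon \boxtimes TY$ after pulling back along the projections, so by the Whitney product formula $w(X\times Y) = \mathrm{pr}_X^{\ast} w(X) \cup \mathrm{pr}_Y^{\ast} w(Y)$, and similarly $p(X\times Y) = \mathrm{pr}_X^{\ast} p(X) \cup \mathrm{pr}_Y^{\ast} p(Y)$ modulo $2$-torsion, which suffices since in our setting the relevant classes are assumed to vanish outright. If $w_j(X) = 0$ and $w_j(Y) = 0$ for all $j \geq 1$ (i.e.\ the total classes are $1$), then the product of the pullbacks is again $1$, so $w_j(X \times Y) = 0$ for $j \geq 1$; the Pontrjagin case is identical. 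For a connected sum $X \# Y$ of $m$-manifolds, I would use that $X \# Y$ minus a point is, up to homotopy, the wedge $(X \setminus \mathrm{pt}) \vee (Y \setminus \mathrm{pt})$, and that the tangent bundle of $X \# Y$ restricted to the copy of $X \setminus D^m$ (respectively $Y \setminus D^m$) is isomorphic to the restriction of $TX$ (resp.\ $TY$) there. Hence $w_j(X\#Y)$ and $p_j(X\#Y)$ restrict to $w_j(X)$ and $p_j(X)$ on one summand and to $w_j(Y)$ and $p_j(Y)$ on the other; since $H^j(X\#Y;\mathbb{Z}/2\mathbb{Z})$ and $H^{4j}(X\#Y;\mathbb{Z})$ for $0<j<m$ inject into the direct sum of the corresponding groups of the two punctured summands (Mayer--Vietoris), vanishing on both summands forces vanishing on $X \# Y$ in degrees $0 < j < m$; and in the top degree $j = m$ (only possibly relevant for $w_m$) one invokes $w_m(X\#Y) = 0$, e.g.\ because $w_m$ is the mod $2$ Euler class and the Euler characteristic of a connected sum of manifolds each having vanishing top class behaves additively in a way that keeps it even, or more simply because $X$ and $Y$ themselves have $w_m = 0$ and the class is detected by restriction away from the connecting sphere together with the top-degree cell.

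The main obstacle is the bookkeeping in the connected-sum case: one must be a little careful that the isomorphism $H^m(X\#Y;\mathbb{Z}/2\mathbb{Z}) \cong \mathbb{Z}/2\mathbb{Z}$ is genuinely controlled, since unlike the lower degrees it does not inject into the punctured summands. The clean way around this is to observe that the connected sum is formed by gluing $X \setminus \mathrm{Int}\,D^m$ and $Y \setminus \mathrm{Int}\,D^m$ along $S^{m-1}$, that the tangent bundle is glued from $TX$ and $TY$ over a contractible collar, and therefore $T(X\#Y)$ is a bundle that pulls back from a bundle over the wedge; characteristic classes of a bundle pulled back from a wedge are determined by their restrictions to the two wedge summands, and in the top degree the wedge $S^m$-cell of $X\#Y$ receives the class $w_m(X) + w_m(Y) = 0$. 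I would phrase this last point using the cofiber sequence $S^{m-1} \hookrightarrow (X\setminus\mathrm{Int}\,D^m) \sqcup (Y\setminus\mathrm{Int}\,D^m) \to X\#Y$ rather than trying to track cells by hand. Everything else is a direct application of the Whitney product formula and naturality, so the proof is short once this gluing point is stated correctly.
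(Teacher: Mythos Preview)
The paper does not actually prove this proposition: it is stated as a ``very fundamental fact'' with a bare reference to Milnor--Stasheff, so there is no argument in the paper to compare against. Your write-up, however, contains a genuine gap in the first half.

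Your claim that $H^j(\Sigma^m;\mathbb{Z}/2\mathbb{Z}) = 0$ and $H^{4j}(\Sigma^m;\mathbb{Z}) = 0$ for all $j>0$ is false in the top degree: $H^m(\Sigma^m;\mathbb{Z}/2\mathbb{Z}) \cong \mathbb{Z}/2\mathbb{Z}$, and if $m = 4k$ then $H^{4k}(\Sigma^m;\mathbb{Z}) \cong \mathbb{Z}$. So ``purely dimensional reasons'' do not kill $w_m$ or $p_{m/4}$. For $w_m$ there is an easy patch (which you in fact invoke later for connected sums): $w_m$ is the mod-$2$ Euler class and $\chi(\Sigma^m) = 1 + (-1)^m$ is even, so $w_m = 0$. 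For $p_k$ with $m = 4k$ you need a real argument. The standard one is the Hirzebruch signature theorem: the lower $p_i$ vanish by dimension, the $L$-polynomial then reduces to a nonzero rational multiple of $\langle p_k, [\Sigma^m]\rangle$, and this equals the signature $\sigma(\Sigma^m) = 0$; since $H^{4k}(\Sigma^m;\mathbb{Z}) \cong \mathbb{Z}$ is torsion-free this forces $p_k = 0$. Alternatively one cites Kervaire--Milnor that homotopy spheres are stably parallelizable.

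A smaller point: in the product case you dismiss the $2$-torsion ambiguity in the Whitney sum formula for Pontrjagin classes by saying the classes ``vanish outright''. That sentence is not an argument---a priori $p_j(X\times Y)$ could be a nonzero $2$-torsion element. What makes it work is that the error term is built from the odd Chern classes $c_{2i+1}(E_{\mathbb{C}})$, which are $2$-torsion characteristic classes in $H^*(BO;\mathbb{Z})$; since all torsion there has order $2$, each such class is a Bockstein of a polynomial in the $w_j$, and hence vanishes once all $w_j$ do. Your connected-sum discussion is essentially correct.
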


\begin{Prop}
	\label{prop:8}
	For an $m$-dimensional closed and orientable manifold $M$ admitting a product-organized special generic map $f:M \rightarrow {\mathbb{R}}^n$, the $j$-th Stiefel-Whitney class and the $j$-th Pontrjagin class are the zero elements for any positive integer $j>0$.
\end{Prop}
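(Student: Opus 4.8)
The plan is to use the explicit structure of a product-organized special generic map $f : M \rightarrow {\mathbb{R}}^n$, as set up in the paragraphs preceding Definition \ref{def:1}, to compute the tangent bundle $TM$ up to a trivial summand and then invoke the multiplicativity/naturality of Stiefel-Whitney and Pontrjagin classes together with Proposition \ref{prop:7}. Recall that $W_f = W_{f_0}$ is a compact $n$-manifold smoothly immersed in ${\mathbb{R}}^n$, and that over the complement of the collar $N(\partial W_f)$ the map $f$ is a \emph{trivial} smooth $S^{m-n}$-bundle, while over the collar it is a \emph{trivial} linear $D^{m-n+1}$-bundle; the product-organized condition moreover pins the gluing down to a product of standard maps. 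The key geometric observation is that $M$ is thereby exhibited as (the double, along the sphere subbundle, of) a product region $(W_f - {\rm Int}\, N(\partial W_f)) \times S^{m-n}$ glued to the boundary of $N(\partial W_f) \times D^{m-n+1}$, and in both pieces the relevant bundles over $W_f$ are trivial since $W_f$ immerses in ${\mathbb{R}}^n$ and the fibrations over $W_f$ are trivial bundles by construction.

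First I would record the bundle computation. On the piece $f^{-1}(W_f - {\rm Int}\, N(\partial W_f)) \cong (W_f - {\rm Int}\, N(\partial W_f)) \times S^{m-n}$, the tangent bundle is $\mathrm{pr}_1^{\ast} T(W_f - {\rm Int}\, N(\partial W_f)) \oplus \mathrm{pr}_2^{\ast} TS^{m-n}$; since $W_f$ immerses in ${\mathbb{R}}^n$ its tangent bundle is stably trivial, and $TS^{m-n}$ is stably trivial, so $TM$ restricted here is stably trivial. On the collar piece, $f^{-1}(N(\partial W_f)) \cong N(\partial W_f) \times D^{m-n+1}$ with a product smooth structure, so its tangent bundle is again a pullback of a stably trivial bundle plus a trivial bundle, hence stably trivial. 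Because $M$ is covered by these two pieces with the (product-organized) gluing compatible with the product structures, I would argue that $TM$ itself is stably trivial: the clutching construction over the separating sphere subbundle $\partial D^{m-n+1}$-bundle, being a product of standard diffeomorphisms, extends the stable trivialization across. Alternatively and perhaps more cleanly, one observes that $M$ is the boundary of the compact manifold $W$ of Proposition \ref{prop:3} (or just of $N(\partial W_f)\times D^{m-n+1}$ union the mapping-cylinder-type piece), and uses that a stably parallelizable compact manifold has stably parallelizable boundary.

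Having established that $TM$ is stably trivial, i.e. $TM \oplus {\varepsilon}^{\ell}$ is trivial for some $\ell$, the Stiefel-Whitney classes $w_j(M) = w_j(TM)$ and Pontrjagin classes $p_j(M) = p_j(TM)$ vanish for all $j > 0$ by the stability and normalization axioms (a trivial bundle has trivial characteristic classes, and these classes are stable). This is exactly the statement that $M$ shares the characteristic-class behavior of a homotopy sphere described in Proposition \ref{prop:7}.

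The main obstacle I expect is making the globalization step rigorous: combining the two explicit pieces into a single honest stable trivialization of $TM$, rather than merely trivializing $TM$ on each piece separately. The cleanest route is the ``boundary of a stably parallelizable manifold'' argument — one shows $M = \partial W$ for $W$ obtained by thickening the relevant product pieces, notes that $W$ deformation retracts to $W_f$ (Proposition \ref{prop:3}) which is stably parallelizable because it immerses in ${\mathbb{R}}^n$, concludes $W$ is stably parallelizable, and hence so is its boundary $M$; the orientability hypothesis on $M$ is used only to have the Pontrjagin classes take values in untwisted integral cohomology. The product-organized hypothesis is precisely what guarantees the compatibility of structures needed for $W$ to be built from products, so I would foreground that point.
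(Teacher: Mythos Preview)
Your approach is correct in outline but differs from the paper's. The paper's proof is a two-line appeal to an external result: by the very construction preceding Definition~\ref{def:1}, a product-organized special generic map $f:M\to\mathbb{R}^n$ comes with the entire family $\{f_j:M\to\mathbb{R}^{n+j}\}_{j=0}^{m-n}$, so in particular $M$ admits a special generic map into $\mathbb{R}^m$ (equidimensional target); then Eliashberg's theorem \cite{eliashberg} gives directly that $TM\oplus\varepsilon^1$ is trivial, and the conclusion follows. You instead bypass Eliashberg and argue geometrically from the explicit product decomposition of $M$, which is more elementary and self-contained but longer.

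One point in your alternative argument deserves tightening. You write that $W$ deformation retracts to $W_f$, which is stably parallelizable since it immerses in $\mathbb{R}^n$, and ``conclude $W$ is stably parallelizable''. A deformation retraction by itself only controls the homotopy type, not $TW$: you need that the normal bundle of $W_f$ in $W$ is trivial. For a product-organized map this is true because one may take $W\cong W_f\times D^{m-n+1}$ explicitly (the product gluing is exactly what makes $M$ the smoothed boundary of this product), so $TW$ is honestly trivial and $TM\oplus\varepsilon^1\cong TW|_{M}$ is trivial. State this identification of $W$ rather than relying on the retraction. With that fix your route is valid; the paper's route trades this hands-on construction for the black-box citation of \cite{eliashberg}, which is shorter but less transparent about where the product-organized hypothesis enters.
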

\begin{proof}

This is due to \cite{eliashberg}. This admits a special generic map into ${\mathbb{R}}^m$. The Whitney sum of the tangent bundle and a trivial linear bundle whose fiber is the $1$-dimensional Euclidean space $\mathbb{R}$ is trivial. This with fundamental properties of Stiefel-Whitney classes and Pontrjagin classes gives the desired fact. This completes the proof.
\end{proof}
\begin{proof}[A proof of Main Theorem \ref{mthm:3}]
We construct a special generic map in Theorem \ref{thm:8} on a closed and simply-connected manifold $M$ of dimension $m=9$.
This $m$-dimensional manifold is simply-connected. \\
\ \\
\underline{Claim 1} \ For any element of $H^{j_1}(M;\mathbb{Z})$ and any element of $H^{j_2}(M;\mathbb{Z})$, the cup product is the zero element where $j_1,j_2>0$ and $0<j_1+j_2<m=9$ are assumed. \\
\ \\
Since $1 \leq j_1,j_2 \leq 2$, the elements are always the zero elements, we may regard $j_1,j_2 \geq 3$. $H^{m-1}(M;\mathbb{Z})=H^8(M;\mathbb{Z})$ is the trivial group. It is sufficient to consider the case $(j_1,j_2)=(3,3),(3,4),(4,3)$. However, the cup products are always the zero elements by Theorem \ref{thm:5}. This completes the proof of Claim 1. \\
\ \\
\underline{Claim 2} If we restrict the $H^{\ast}(M;\mathbb{Z})$ to the set of all elements which are the zero elements or which  are elements of infinite order, then we have a subalgebra and this is isomorphic to $H^{\ast}(S^4 \times S^{m-4};\mathbb{Z})=H^{\ast}(S^4 \times S^5;\mathbb{Z})$. \\
\ \\
This is a corollary to Claim 1 together with Poincar\'e duality.  \\ 
\ \\
We prepare $2l$ copies of a special generic map before and finitely many copies of a special generic map into ${\mathbb{R}}^5$ in Example \ref{ex:1} on $S^j \times S^{m-j}=S^j \times S^{9-j}$ whose restriction to the singular set is an embedding and whose image is diffeomorphic to $S^j \times D^{5-j}$ where $j=2,3,4$.

By an elementary construction, used in \cite{saeki2} for example, we can easily construct a special generic map into ${\mathbb{R}}^5$ on a manifold $M_1$ of dimension $m=9$ represented as a connected sum of these manifolds. We can also have a special generic map whose restriction to the singular set is an embedding.

Instead, we prepare a special generic map into ${\mathbb{R}}^n$ in Theorem \ref{thm:8} as a product-organized one, which is introduced in Definition \ref{def:1} as an important special generic map in Theorem \ref{thm:7}. We have a special generic map there into ${\mathbb{R}}^n$ for $6 \leq n \leq m$ ($n=6,7,8,9$). We prepare $2l$ copies of the special generic map as before and finitely many copies of a special generic map into ${\mathbb{R}}^n$ in Example \ref{ex:1} on $S^j \times S^{m-j}=S^j \times S^{9-j}$ whose restriction to the singular set is an embedding and whose image is diffeomorphic to $S^j \times D^{n-j}$ where $j=2,3,4$.


We take $M_2$ as a manifold represented as a connected sum of $l$ copies of a manifold represented as $S^{\prime \prime} \times S^{m-5}=S^{\prime \prime} \times S^{4}$ where $S^{\prime \prime}$ is a manifold used in Corollary \ref{cor:1} and finitely many copies of $S^2 \times S^{m-2}=S^2 \times S^7$, $S^3 \times S^{m-3}=S^3 \times S^6$, and $S^4 \times S^{m-4}=S^4 \times S^5$. We construct a special generic map into ${\mathbb{R}}^n$ for $6 \leq n \leq m$ ($n=6,7,8,9$).

For copies of $S^{\prime \prime} \times S^{m-5}=S^{\prime \prime} \times S^{4}$, we can construct special generic maps into ${\mathbb{R}}^n$. 
We consider the product map of a canonical projection of the unit sphere into ${\mathbb{R}}^{n-5}$ and the identity map on $S^{\prime \prime}$ and we can embed the manifold of the target into ${\mathbb{R}}^n$ to obtain a special generic map for $6 \leq n \leq m$ ($n=6,7,8,9$).

We can also do the construction for the desired manifolds represented as connected sums as before.

 By taking suitable connected sums, we can see that the pair of the manifolds enjoys the properties (\ref{mthm:3.1}) and (\ref{mthm:3.2}), and due to the non-existence, shown by virtue of Theorem \ref{thm:5}, (the proof of) Theorem \ref{thm:8}, or (that of) Corollary \ref{cor:1}, enjoys the property (\ref{mthm:3.3}). Propositions \ref{prop:7} and \ref{prop:8} yield the property (\ref{mthm:3.4}) and completes the proof.

\end{proof}
\begin{MThm}
	\label{mthm:4}
	Let $l>0$ be a positive integer. Let $\{G_j\}_{j=0}^8$ be a sequence of finitely generated commutative groups of length $9$ satisfying the following conditions.
	\begin{itemize}
		\item $G_j$ and $G_{8-j}$ are isomorphic for $0 \leq j \leq 8$. $G_0$ is isomorphic to $\mathbb{Z}$ and $G_1$ is trivial.
		\item The torsion subgroup of $G_2$ is isomorphic to a group represented as a direct sum of $2l$ copies of the group of order $2$.
		\item For $j \neq 2,5$, $G_j$ is free. The rank of $G_4$ is at least $4l$ and even.
	\end{itemize}
	Then there exists a pair $(M_1,M_2)$ of $8$-dimensional closed and simply-connected manifolds enjoying the following properties.
	\begin{enumerate}
		\item The $j$-th integral homology groups of $M_1$ and $M_2$ are isomorphic to $G_j$ for $0 \leq j \leq m$.
		\item The sets consisting of all elements whose orders are infinite and the zero elements of the integral cohomology rings of $M_1$ and $M_2$ are subalgebras isomorphic to the integral cohomology ring of a $8$-dimensional manifold, which is represented as a connected sum of products of standard spheres and presented in Example \ref{ex:1}.
			\item Let $i=1,2$. $M_i$ admits a special generic map into ${\mathbb{R}}^{n}$ for $i+4 \leq n \leq 8$ whereas it admits no special generic maps into ${\mathbb{R}}^{n}$ for $1 \leq n \leq i+3$. Furthermore, we can construct the special generic map on $M_i$ so that the restriction to the singular set is an embedding.
		\item The $j$-th Stiefel-Whitney classes and the $j$-th Pontryagin classes of $M_1$ and $M_2$ are the zero elements for any positive integer $j$. 
	
	\end{enumerate}
\end{MThm}
\begin{proof}[A proof of Main Theorem \ref{mthm:4}]
	We can prove similarly by replacing $m=9$ by $m=8$ in the proof of Main Theorem \ref{mthm:3} except Claims 1 and 2. We revise them. Let $f:M \rightarrow {\mathbb{R}}^n$ be a special generic map on a closed and simply-connected manifold of dimension $m=8$ in Theorem \ref{thm:8}. \\
	\ \\	
\underline{Claim 3} \ For any element of $H^{j_1}(M;\mathbb{Z})$ and any element of $H^{j_2}(M;\mathbb{Z})$, the cup product is the zero element where $j_1,j_2>0$ and $0<j_1+j_2<m=8$ are assumed. \\
\ \\
Since $1 \leq j_1,j_2 \leq 2$, the elements are always the zero elements, we may regard $j_1,j_2 \geq 3$. $H^{m-1}(M;\mathbb{Z})=H^7(M;\mathbb{Z})$ is the trivial group. It is sufficient to consider the case $(j_1,j_2)=(3,3)$. However, the cup products are always the zero elements by Theorem \ref{thm:5}. This completes the proof of Claim 3. \\
\ \\
\underline{Claim 4} If we restrict $H^{\ast}(M;\mathbb{Z})$ to the set of all elements which are the zero elements or which are elements of infinite order, then we have a subalgebra and this is isomorphic to $H^{\ast}(S^4 \times S^{m-4};\mathbb{Z})=H^{\ast}(S^4 \times S^{4};\mathbb{Z})$. \\
\ \\
For a topological space $X$, let $\chi(X)$ denote its Euler number.
For a special generic map $f:M \rightarrow {\mathbb{R}}^5$ here, 
\begin{eqnarray*}
	\chi(M)&=&\chi(\partial W_f)\chi(D^{m-4})+\chi(W_f)\chi(S^{m-5}) \\
	&=&\chi(\partial W_f)\chi(D^{4})+\chi(W_f)\chi(S^{3}) \\
	&=&(2+2) \times 1+0\\&=&4.
\end{eqnarray*}
Remember that we have a special generic map 
$f$ as in Theorem \ref{thm:8}. $\partial W_f$ is, as discussed in Propositions \ref{prop:4} and \ref{prop:5} and the proofs, diffeomorohic to the disjoint union of $B$ in Propostion \ref{prop:4} and the $4$-dimensional unit sphere and that the Euler number is calculated as $(1+1)+2=2+2=4$.

The ranks of $H_j(M;\mathbb{Z}) \cong H_j(W_f;\mathbb{Z})$ and $H^j(M;\mathbb{Z}) \cong H^j(W_f;\mathbb{Z})$ are $0$ for $j=1,2,3$ by Proposition \ref{prop:3} (\ref{prop:3.5}). 
The ranks of $H_j(M;\mathbb{Z})$ and $H^j(M;\mathbb{Z})$ are shown to be not $0$ if and only if $j=0,4,8$. The ranks of $H_4(M;\mathbb{Z})$ and $H^4(M;\mathbb{Z})$ are shown to be $2$ by the property that $M$ is connected and orientable, showing that the ranks of $H^0(M;\mathbb{Z})$ and $H^8(M;\mathbb{Z})$ are $1$, and the Euler number of $M$. 

We apply some arguments in section 3 of \cite{saeki2} to determine the structure of the subalgebra and complete the proof of Claim 4.

We can smoothly embed a $1$-dimensional compact and connected manifold $L$ diffeomorphic to a closed interval in $W_f$ in such a way that the following two are enjoyed.
\begin{itemize}
\item The interior ${\rm Int}\ L$ is embedded in the interior ${\rm Int}\ W_f$.
\item The boundary $\partial L$ is embedded in the boundary $\partial W_f$. The boundary consists of exactly two points and they are embedded in different connected compoennts of the boundary $\partial W_f$.
\end{itemize}
Moreover, we can construct the composition of the restriction of $q_f$ to ${q_f}^{-1}(L)$ with some embedding into $\mathbb{R}$ and we may regard this as a Morse function on a $4$-dimensional standard sphere in $M$. An element of $H_4(M;\mathbb{Z})$ represented by this is regarded as the Poincar\'e dual to the cohomology dual to an element represented by some connected component $C_{S(f)}$ of the singular set of the special generic map $f$, which is a $4$-dimensional closed and connected manifold smoothly embedded into $M$. Note that we can define a basis of $H_4(M;\mathbb{Z})$ by taking these two elements, which are mapped to a generator of $H_4(W_f;\mathbb{Z}) \cong \mathbb{Z}$ and the zero element of $H_4(W_f;\mathbb{Z})$ by the homomorphism ${q_f}_{\ast}:H_4(M;\mathbb{Z}) \rightarrow H_4(W_f;\mathbb{Z})$, respectively.

We show that each of these two $4$-dimensional closed and connected submanifolds ${q_f}^{-1}(L)$ and $C_{S(f)}$ in $M$ is smoothly isotoped to another $4$-dimensional closed and connected submanifold apart from the original one. 

We can easily see that we can smoothly isotope $L$ in $W_f$ to another $1$-dimensional smooth submanifold manifold diffeomorphic to a closed interval apart from the original one. 
We can smoothly isotope ${q_f}^{-1}(L)$ to another $4$-dimensional sphere apart from the original sphere.

We can have the map $f$ such that the "linear bundle over $N(\partial W_f)$ in Propostion \ref{prop:2}" is trivial by Proposition \ref{prop:2} (\ref{prop:2.2}). The triviality of this bundle means that We can smoothly isotope the connected component $C_{S(f)}$ of the singular set to another $4$-dimensional closed and connected manifold apart from the original $4$-dimensional submanifold.
 
Poincar\'e duality theorem for $M$ completes the proof of Claim 4. \\

We can prove all similarly by setting $m=8$ in the proof of Main Theorem \ref{mthm:3} instead. This completes the proof.
\end{proof}

\section{Acknowledgment and on data availability.}
The present work and the author is supported by JSPS KAKENHI Grant Number JP17H06128 "Innovative research of geometric topology and singularities of differentiable mappings"
(https://kaken.nii.ac.jp/en/grant/KAKENHI-PROJECT-17H06128/) as a member of the project: Principal investigator is Osamu Saeki.
 Independently, this work has been also supported by "The Sasakawa Scientific Research Grant" (2020-2002 : https://www.jss.or.jp/ikusei/sasakawa/).

The author declares that data supporting the present study are available within the present paper.


\begin{thebibliography}{30}
\bibitem{barden} D. Barden, \textsl{Simply Connected Five-Manifolds}, Ann. of Math. (3) 82 (1965), 365--385.
\bibitem{calabi} E. Calabi, \textsl{Quasi-surjective mappings and a generalization of Morse theory}, Proc. U.S.-Japan Seminar in Differential Geometry, Kyoto, 1965, pp. 13--16.
\bibitem{crowley} D. Crowley, \textsl{5-manifolds{\rm :} 1-connected}, Bulletin of the Manifold Atlas (2011), 49--55, http://www.boma.mpim-bonn.mpg.de/data/33print.pdf.

\bibitem{eellskuiper} J. J. Eells and N. H. Kuiper, \textsl{An invariant for certain smooth manifolds}, Ann. Mat. Pura Appl. 60 (1962), 93--110.
\bibitem{eliashberg} Y. Eliashberg, \textsl{On singularities of folding type}, Math. USSR Izv. 4 (1970). 1119--1134.
\bibitem{golubitskyguillemin} M. Golubitsky and V. Guillemin, \textsl{Stable mappings and their singularities}, Graduate Texts in Mathematics (14), Springer-Verlag (1974).
\bibitem{hatcher} A. Hatcher, \textsl{Algebraic Topology}, A modern, geometrically flavored introduction to algebraic topology, Cambridge: Cambridge University Press (2002).
\bibitem{kitazawa1} N. Kitazawa, \textsl{On round fold maps} (in Japanese), RIMS Kokyuroku Bessatsu B38 (2013), 45--59.
\bibitem{kitazawa2} N. Kitazawa, \textsl{On manifolds admitting fold maps with singular value sets of concentric spheres}, Doctoral Dissertation, Tokyo Institute of Technology (2014).
\bibitem{kitazawa3} N. Kitazawa, \textsl{Fold maps with singular value sets of concentric spheres}, Hokkaido Mathematical Journal Vol.43, No.3 (2014), 327--359.
\bibitem{kitazawa4} N. Kitazawa, \textsl{Round fold maps and the topologies and the differentiable structures of manifolds admitting explicit ones}, submitted to a refereed journal, arXiv:1304.0618 (the title has changed).
\bibitem{kitazawa5} N. Kitazawa, \textsl{Constructing fold maps by surgery operations and homological information of their Reeb spaces}, submitted to a refereed journal, arxiv:1508.05630 (the title has been changed).
\bibitem{kitazawa6} N. Kitazawa, \textsl{Lifts of spherical Morse functions}, submitted to a refereed journal, arxiv:1805.05852.
\bibitem{kitazawa7} N. Kitazawa, \textsl{Generalizations of Reeb spaces of special generic maps and applications to a problem of lifts of smooth maps}, arxiv:1805.07783.  
\bibitem{kitazawa8} N. Kitazawa, \textsl{A new explicit way of obtaining special generic maps into the $3$-dimensional Euclidean space}, arxiv:1806.04581.
\bibitem{kitazawa9} N. Kitazawa, \textsl{Notes on fold maps obtained by surgery operations and algebraic information of their Reeb spaces}, arxiv:1811.04080.
\bibitem{kitazawa10} N. Kitazawa, \textsl{New observations on cohomology rings of Reeb spaces of explicit fold maps}, arxiv:1911.09164.
\bibitem{kitazawa11} N. Kitazawa, \textsl{Notes on explicit smooth maps on 7-dimensional manifolds into the 4-dimensional Euclidean space}, submitted to a refereed journal, arxiv:1911.11274.
\bibitem{kitazawa12} N. Kitazawa, \textsl{Explicit fold maps on 7-dimensional closed and simply-connected manifolds of new classes}, arxiv:2005.05281.
\bibitem{kitazawa13} N. Kitazawa, \textsl{Special generic maps and fold maps and information on triple Massey products of higher dimensional differentiable manifolds}, arxiv:2006.08960.
\bibitem{kitazawa14} N. Kitazawa, \textsl{Closed manifolds admitting no special generic maps whose codimensions are negative and their cohomology rings}, arxiv:2008.04226.
\bibitem{kitazawa15} N. Kitazawa, \textsl{A note on cohomological structures of special generic maps}, a revised version is submitted to a refereed journal based on positive comments by referees and editors after the second submission to a refereed journal.
\bibitem{milnor} J. Milnor, \textsl{On manifolds homeomorphic to the $7$-sphere}, Ann. of Math. (2) 64 (1956), 399--405.
\bibitem{milnorstasheff} J. Milnor and J. Stasheff, \textsl{Characteristic classes}, Annals of Mathematics Studies, No. 76. Princeton, N. J; Princeton University Press (1974).
\bibitem{nishioka} M. Nishioka, \textsl{Special generic maps of $5$-dimensional manifolds}, Revue Roumaine de Math`{e}matiques Pures et Appliqu\`{e}es, Volume LX No.4 (2015), 507--517.
\bibitem{saeki} O. Saeki, \textsl{Notes on the topology of folds}, J. Math. Soc. Japan Volume 44, Number 3 (1992),551--566.
\bibitem{saeki2} O. Saeki, \textsl{Topology of special generic maps of manifolds into Euclidean spaces}, Topology Appl. 49 (1993), 265--293.
\bibitem{saeki3} O. Saeki, \textsl{Topology of special generic maps into $\mathbb{R}^3$}, Workshop on Real and Complex Singularities (Sao Carlos, 1992), Mat. Contemp. 5 (1993), 161--186.
\bibitem{saekisakuma} O. Saeki and K. Sakuma, \textsl{On special generic maps into ${\mathbb{R}}^3$}, Pacific J. Math. 184 (1998), 175--193.
\bibitem{saekisakuma2} O. Saeki and K. Sakuma, \textsl{Special generic maps of $4$-manifolds and compact complex analytic surfaces}, Math. Ann. 313, 617--633, 1999.
\bibitem{saekisuzuoka} O. Saeki and K. Suzuoka, \textsl{Generic smooth maps with sphere fibers} J. Math. Soc. Japan Volume 57, Number 3 (2005), 881--902.
\bibitem{smale} S. Smale, \textsl{On the structure of $5$-manifolds}, Ann. of Math. (2) 75 (1962), 38--46. 
\bibitem{wrazidlo} D. J. Wrazidlo, \textsl{Standard special generic maps of homotopy spheres into Eucidean spaces}, Topology Appl. 234 (2018), 348--358, arxiv:1707.08646.
\bibitem{wrazidlo2} D. J. Wrazidlo, \textsl{On special generic maps of rational homology spheres into Euclidean spaces}, arxiv:2009.05928.
\end{thebibliography}
\end{document}